\documentclass[reqno,11pt]{article}
\usepackage{a4wide,xcolor,eucal,enumerate,mathrsfs}
\usepackage[normalem]{ulem}
\usepackage{amsmath,amssymb,epsfig,amsthm}
\usepackage[latin1]{inputenc}

%

\newcommand{\N}{\mathbb{N}}

\newcommand{\R}{\mathbb{R}}




\newcommand{\mm}{{\mbox{\boldmath$m$}}}










\newcommand{\sfd}{{\sf d}}






\newcommand{\Kliminf}{K\kern-3pt-\kern-2pt\mathop{\rm lim\,inf}\limits}  
\newcommand{\supp}{\mathop{\rm supp}\nolimits}   
\renewcommand{\d}{{\mathrm d}}

\newcommand{\restr}[1]{\lower3pt\hbox{$|_{#1}$}}


\newcommand{\down}{\downarrow}              
\newcommand{\up}{\uparrow}
  
\newcommand{\nchi}{{\raise.3ex\hbox{$\chi$}}}




\setlength{\marginparwidth}{3cm}






\newcommand{\Pc}[2]{\overline{#1}\kern-2pt^{\vphantom 0}_{#2}}





\numberwithin{equation}{section}

\newtheorem{theorem}{Theorem}[section]

\newtheorem{lemma}[theorem]{Lemma}
\newtheorem{proposition}[theorem]{Proposition}

\newtheorem{definition}[theorem]{Definition}

\newtheorem{example}[theorem]{Example}
\newtheorem{remark}[theorem]{Remark}

\renewcommand{\mm}{\mathfrak m}

\newcommand{\LIP}{{\rm LIP}}
\newcommand{\Ric}{{\rm Ric}}
\newcommand{\length}{{\rm length}}
\newcommand{\mms}{{\rm m.m.s.}}


\newcommand{\fr}{\hfill$\blacksquare$}                      






\hyphenation{ma-ni-folds}
\title{A new notion of angle between three points in a metric space}
\begin{document}

\author{
    Andrea Mondino
    \thanks{Scuola Normale Superiore, Pisa, \textsf{andrea.mondino@sns.it}}
   }

\maketitle

\begin{abstract}
We give a new notion of angle in general metric spaces; more precisely, given a triple a points $p,x,q$ in a metric space $(X,\sfd)$, we introduce the notion of angle cone $\bold{\angle_{pxq}}$ as being an interval $\bold{\angle_{pxq}}:=[\angle^-_{pxq},\angle^+_{pxq}]$, where the quantities $\angle^\pm_{pxq}$ are defined in terms of the distance functions from $p$ and $q$ via a duality construction of differentials and gradients holding for locally Lipschitz functions on a general metric space. Our definition in the Euclidean plane gives the standard angle between three points and in a Riemannian manifold coincides with the usual angle between the geodesics, if $x$ is not in the cut locus of $p$ or $q$. We show that in general the angle cone is not single valued (even in case the metric space is a smooth Riemannian manifold, if $x$ is in the cut locus of $p$ or $q$), but if we endow the metric space with a positive Borel measure $\mm$ obtaining the metric measure space $(X,\sfd,\mm)$ then under quite general assumptions (which include many fundamental examples as Riemannian manifolds, finite dimensional Alexandrov spaces with curvature bounded from below, Gromov-Hausdorff limits of Riemannian manifolds with Ricci curvature bounded from below, and normed spaces with strictly convex norm), fixed $p,q \in X$, the angle cone at $x$ is single valued for $\mm$-a.e. $x \in X$. We prove some basic properties of the angle cone (such as the invariance under homotheties of the space) and we analyze in detail the case $(X,\sfd,\mm)$ is a measured-Gromov-Hausdorff limit of a sequence of Riemannian manifolds with Ricci curvature bounded from below, showing the consistency of our definition  with a recent construction of Honda \cite{Honda12}.  
\end{abstract}

\tableofcontents

\section{Introduction}
The notion of \underline{angle}, together with the one of \underline{distance} (which is directly linked with one of length), is at the basis of geometry from both the conceptual and the historical point of view, let us just give two fundamental examples.
\begin{itemize}
\item \underline{Euclidean geometry}: the last two of the  five classical postulates of Euclid for plane geometry (see the  beginning of the first book of the \textquotedblleft Elements\textquotedblright, pp.195-202 in \cite{Euclide}), as well as the Hilbert's axiomatization of Euclidean geometry (see \cite{Hilbert}), are based on the notion of angle.
\item \underline{Riemannian geometry}: in the very first definition of a Riemannian manifold one assigns a scalar product on the tangent spaces, i.e. the length and the angle of tangent vectors. 
\end{itemize}
The natural framework in which it is meaningful to speak of  distance between couples of points  is given by the class of  metric spaces; the goal of this paper is to give a definition of angle between three points in a metric space  in terms just of the distance.
\medskip

Some early attempts to give a definition of angle between three points in a metric space go back to the work of Wilson \cite{Wilson} in the 30's (see also  \cite{Valentine}); the drawback of this approach  is that  the definition was given under  very particular convexity and embeddability assumptions  (satisfied in linear spaces but, for instance, what Wilson called \textquotedblleft the four point property\textquotedblright is at the base of his definition and does not hold for the round sphere $S^2\subset \R^3$). 

In what are now known as Alexandrov spaces (these are metric spaces satisfying curvature assumptions, which in case of smooth Riemannian manifolds correspond to upper or lower bounds on the sectional curvature; there is a huge literature about this argument, the interested reader may see \cite{BBY}), Alexandrov  was able to define the angle between two geodesics matching at a point as follows: let $\gamma_1,\gamma_2$ be unit speed geodesics\footnote{in our terminology the geodesics are length minimizing by definition, see \eqref{defgeo}}  such that $\gamma_1(0)=\gamma_2(0)$, then he set 
\begin{equation}\label{eq:angleAlex}
\angle(\gamma_1, \gamma_2):= \arccos \left(\lim_{s,t\to 0} \frac{s^2+t^2-\sfd^2(\gamma_1(s), \gamma_2(t))}{2st} \right); 
\end{equation}
the crucial observation being that  the curvature bounds ensure the existence of the limit above (fact which may not be true in a general metric space), hence the definition makes sense. 

To conclude the brief historical overview, recently, Honda \cite{Honda12} proved that the definition of angle \eqref{eq:angleAlex} can be suitably adapted to the case  the metric space is a Gromov-Hausdorff limit of a sequence of Riemannian manifolds with Ricci curvature bounded from below (the theory of such spaces was started by Cheeger and Colding \cite{ChCo3}). 

Let us stress that our new definition holds in general metric spaces and no curvature assumption is needed.
\medskip

Our idea for defining the angle between three points in a metric space is to use the distance functions and to  adapt the duality between differentials and gradients introduced by Gigli in \cite{G12}. Let us start with an  example in the smooth case.  

\begin{example}[The angle between three points in a Riemannian manifold]\label{ex:angleRiem}
\rm{ Let $(M^n,g)$ be a smooth $n$-dimensional Riemannian manifold and fix $p,q \in M$. The goal is to define, for  $x \in M$, the angle $\angle_{pxq}$ between $p$ and $q$ at $x$ in terms of the distance functions $r_p(\cdot):=\sfd(p,\cdot)$ and $r_q(\cdot):=\sfd(q,\cdot)$, where $\sfd(\cdot,\cdot)$ is the distance induced by the Riemannian metric. 

Called $C_p$ the cut locus of the point $p$, notice that the distance function $r_p$ is smooth on $M\setminus(C_p\cup \{p\})$ and moreover there is a unique unit speed geodesic$^1$   from $x$ to $p$, for $x \in M\setminus(C_p\cup \{p\})$; hence, for $x \in M\setminus(C_p \cup C_q \cup\{p,q\})$ we can set 
\begin{equation}\label{eq:angleRiem}
\angle_{pxq}:=\arccos\left[g_x(\nabla r_p(x), \nabla r_q(x))\right]=\arccos\left[g_x(\dot{\gamma}_{xp}(0), \dot{\gamma}_{xq}(0))\right]=\angle(\gamma_{xp},\gamma_{xq}),
\end{equation}
where $\nabla r_p(x)$ is the gradient of the distance function $r_p$ computed at the point $x$, and $\gamma_{xp}$ (resp. $\gamma_{xq}$) is the unique unit speed geodesic from $x$ to $p$ (resp. from $x$ to $q$).
Observe that in the case of the euclidean plane $\R^2$, the definition \eqref{eq:angleRiem} gives the usual angle between  $p$ and $q$ at $x$. 

Recall that in general the cut locus is non empty (for instance if $M$ is compact the cut locus is never empty), but it has null measure with respect to the $n$-dimensional Riemannian  volume measure (more precisely it is $(n-1)$-rectifiable, see for instance \cite{MantMenn}), therefore \eqref{eq:angleRiem} does not hold in general for every $x \in M$: indeed if $x\in C_p\cup C_q$ then the distance functions are no more differentiable, there can be more geodesic connecting $x$ to $p$ or $q$, hence  definition \eqref{eq:angleRiem} is no more well posed. Nevertheless we can at least define a notion of \textquotedblleft angle cone\textquotedblright between $p$ and $q$ at $x$ as being the interval $\bold{\angle_{pxq}}:=[\angle^{-}_{pxq}, \angle^+_{pxq}]$  where 
\begin{eqnarray}
\angle^+_{pxq}&=& \sup \{\angle(\gamma_{xp},\gamma_{xq}) : \gamma_{xp}, \gamma_{xq} \text{ are unit speed geod. from $x$ to $p$ and $q$ resp.} \}  \label{eq:angle-mot} \\
\angle^-_{pxq}&=&\inf \{\angle(\gamma_{xp},\gamma_{xq}): \gamma_{xp}, \gamma_{xq} \text{ are unit speed geod. from $x$ to $p$ and $q$ resp.} \} \label{eq:angle+mot}.
\end{eqnarray}
Summarizing, fixed $p,q \in M$, for every $x \in M$ we can define an angle cone $\bold{\angle_{pxq}}$ between $p$ and $q$ at $x$, moreover for a.e. $x \in M$  the angle cone is single valued and we can speak of angle between $p$ and $q$ at $x$.
}\fr 
\end{example}

Our approach for defining the angle between three points in a general metric space is to suitably  understand the term $g_x(\nabla r_p(x), \nabla r_q(x))$ in \eqref{eq:angleRiem}. Notice that in a general  metric space we do not have  notions of differential and gradient of a Lipschitz function (which require a tangent space structure for having a meaning), we have just the local Lipschitz constant (called also slope, the standard definition being  \eqref{eq:slope}) which can be read as \textquotedblleft the modulus of the differential\textquotedblright. The point is that, given two locally Lipschitz functions $f$ and $g$ on the metric space $(X,\sfd)$, the local Lipschitz constant is enough in order to define \textquotedblleft the differential of $g$ acting on the gradient of $f$\textquotedblright; since such an object can be multivalued even in  normed spaces if the norm is not strictly convex (see the discussion in the introduction of \cite{G12}), what is actually meaningful is to define \textquotedblleft the differential of $g$ acting on the gradient of $f$\textquotedblright as an interval $[D^-g(\nabla f), D^+g(\nabla f)]$. This strategy is strongly inspired from a recent work of Gigli \cite{G12} (done in the different framework of weak upper gradients of Sobolev functions) and it is described in Subsection \ref{Subsec:Prel}, in particular see Definition \ref{def:Dfnablag}.

With this tool in our hands and inspired by the smooth case \eqref{eq:angleRiem}-\eqref{eq:angle-mot}-\eqref{eq:angle+mot}, in Subsection \ref{Subsec:DefAngleCone} we  define the angle cone of a triple of points $p,x,q\in X$ in the metric space $(X,\sfd)$ as  the interval $\bold{\angle_{pxq}}:=[\angle^-_{pxq}, \angle^+_{pxq}]$ where 
$$
\angle^-_{pxq}:=\arccos[D^+r_p(\nabla r_q)(x)], \quad \angle^+_{pxq}:=\arccos[D^-r_p(\nabla r_q)(x)],
$$
being $r_p:=\sfd(p,\cdot)$ and $r_q:=\sfd(q,\cdot)$  the distance functions from $p$ and $q$; see Definition \ref{def:anglecone(X,d)} and the discussion before it. The relation with the smooth case is discussed in Example \ref{ex:Mg}. In general we cannot expect the angle cone to be single valued since, as explained in Example \ref{ex:angleRiem}, already in the smooth framework this  may not be the case. There are at least two reasons for this non uniqueness behavior:
\begin{itemize}
\item $x$ is in the cut locus of $p$ or  $q$, see Example \ref{Ex:S2}.
\item Potential lack of strict convexity of the space $(X,\sfd)$, see Example \ref{Ex:RdueLinfty}.  
\end{itemize}
In Subsection \ref{Subsec:Properties} we prove two fundamental properties of the angle cone:
\begin{itemize}
\item If $p=q$ then the angle cone is single valued: for every $x\in X$ it holds $\bold{\angle_{pxp}}=\{\angle_{pxp}\}$. If moreover $(X,\sfd)$ is a length space then for every $p,x \in X$ we have $\bold{\angle_{pxp}}=\{0\}$. See Proposition \ref{prop:pxp=0} and Remark \ref{rem:pxp=0}.
\item The angle cone is invariant under homotheties, i.e. rescaling of the distance function by a positive constant. See Proposition \ref{prop:homothety}.
\end{itemize}
Let us also remark that, in a general metric space, we cannot expect the angle $\bold{\angle_{pxp}}$ to be symmetric in $p$ and $q$. Indeed, already in a smooth Finsler manifold, the symmetry holds if and only if the manifold is actually Riemannian (see for instance the discussion in \cite{G13}-Appendix B).

After this first part on general metric spaces, in Section \ref{sec:(X,d,m)} we consider the case of metric measure spaces, i.e. metric spaces endowed with a positive Borel measure $\mm$; see \eqref{eq:mms} for the precise definition. 
\\In Example \ref{ex:angleRiem} we saw that, already in the smooth case, fixed $p,q \in M$, the best we can hope for in general is that the angle cone $\bold{\angle_{pxq}}$ is single valued for a.e. $x\in X$. In Section  \ref{sec:(X,d,m)} we will see that in a large class of metric measure spaces, namely the Lipschitz-infinitesimally strictly convex and the Lipschitz-infinitesimally Hilbertian ones (see Definition \ref{def:Infin} and Remarks \ref{rem:quadratic}-\ref{rem:Equiv}), it is still the case: fixed $p,q \in X$ the angle cone $\bold{\angle_{pxq}}$ is single valued for $\mm$-a.e. $x\in X$ (see Theorem \ref{thm:Infin}). Moreover, in the Lipschitz-infinitesimally Hilbertian spaces, the angle $\bold{\angle_{pxq}}$ is symmetric in $p$ and $q$, for $\mm$-a.e. $x \in X$ and every $p,q \in X$ (see Theorem \ref{thm:Infin}).  This class of spaces includes many interesting examples: smooth Riemannian  manifolds, finite dimensional Alexandrov spaces with curvature bounded from below, Gromov-Hausdorff limits of Riemannian manifolds with Ricci curvature bounded from below (for more details see Remark \ref{rem:ExIHS}).   

Finally, in Subsection \ref{Subsec:Honda}, we analyze the case  the metric measure space $(X,\sfd,\mm)$ is a Gromov-Hausdorff limit of a sequence of Riemannian manifolds with Ricci curvature bounded below. In this framework, suitably adapting Alexandrov's formula for the angles \eqref{eq:angleAlex}, Honda \cite{Honda12} defined the angle between a.e. triple of points. More precisely,  fixed $p,q\in X$, for $\mm$-a.e. $x \in X$ he defined a unique angle $\angle(p,x,q)$ between  $p$ and $q$ at $x$ as the angle formed by the geodesics from $p$ and $q$ to $x$. It is essential to observe that the construction of Honda holds just for  $\mm$-a.e. $x \in X$; indeed  Colding and Naber  (see Theorem 1.2 and 1.3 in \cite{ColNab}) gave an example of a pointed limit space $(X,\sfd, \mm, \bar{x})$  (which is even a non collapsed limit space such that every point is regular, i.e. the unique tangent cone is a euclidean space) such that for every couple of geodesics $\gamma_1, \gamma_2$ beginning at $\bar{x}$ and every $\theta \in [0,\pi]$ there exists a sequence $t_i \down 0$ such that
$$\cos \theta= \lim_{i \to \infty} \frac{2 t_i^2- \sfd^2(\gamma_1(t_i), \gamma_2(t_i))}{2t_i^2}.$$

In Theorem \ref{thm:equivalenceAngle}  we prove that for every fixed $p,q \in X$, the construction of the angle of Honda coincides with our definition $\mm$-a.e.:
$$\bold{\angle_{pxq}}=\{\angle(p,x,q)\}$$ 
for $\mm$-a.e. $x \in X$.  In order to prove the angles equivalence Theorem \ref{thm:equivalenceAngle}, in Lemma \ref{lem:<>=Dnabla}, we show the $\mm$-a.e. equivalence of the infinitesimal Hilbertian structures of the limit spaces in the sense of Cheeger-Colding \cite{ChCo3} and in the sense of Definition \ref{def:Infin}.

\smallskip

\noindent {\bf Acknowledgment.}  The author acknowledges the support
of the ERC ADG GeMeThNES. He  wishes to express his deep gratitude to Luigi Ambrosio for inspiring discussions on metric spaces and to Nicola Gigli for his comments on  the final manuscript.

\section{A notion of angle cone in general metric spaces}\label{sec:(X,d)}
\subsection{Preliminaries and notations}\label{Subsec:Prel}
In this fist part of the paper  $(X,\sfd)$ is a general metric space, later in the paper we will assume more structure; the open ball of center $x \in X$ and radius $r>0$ is denoted with $B_r(x)$. 
\\Given two points $x_0,x_1 \in X$, a continuous curve $\gamma:[0,1]\to X$ satisfying $\gamma(0)=x_0$, $\gamma(1)=x_1$ is said to be a (constant speed, length minimizing) geodesic if 
\begin{equation}\label{defgeo}
\sfd(\gamma(s),\gamma(t))=|t-s|\, \sfd(\gamma(0),\gamma(1))\qquad\forall s,\,t\in
[0,1].
\end{equation}
We say that $(X,\sfd)$ is a \emph{geodesic space} if every pair of points $x_0,x_1 \in X$ is joined by a geodesic; more generally we say that $(X,\sfd)$ is a \emph{length space} if for every pair of points $x_0,x_1 \in X$ the infimum of the lengths of continuous curves joining $x_0$ to $x_1$ is equal to $\sfd(x_0,x_1)$. 

The space of real valued Lipschitz functions on $(X,\sfd)$ is denoted with $\LIP(X)$.
For a  function $f:X \to \R$ the \emph{slope}, also called \emph{local Lipschitz constant}, $|Df|:X \to [0,+\infty]$ is defined by
\begin{equation}\label{eq:slope}
|Df|(x_0):=\limsup_{y\to x}\frac{|f(y)-f(x)|}{\sfd(y,x)} \text{ if $x_0$ is not isolated,}\quad  0 \text{ otherwise}.
\end{equation}
For $f,g:X \to \R$ it is easy to check that
\begin{eqnarray}
|D(\alpha f+ \beta g)|&\leq& |\alpha|\, |Df|+ |\beta|\, |Dg| \quad \forall \alpha, \beta \in \R,\label{eq:slopeSubAdd} \\
|D(fg)|&\leq& |f|\, |Dg|+ |g| \, |Df|. \label{eq:slopeProduct}
\end{eqnarray}
Fixed  $f,g:X \to \R$,  observe that \eqref{eq:slopeSubAdd} implies the convexity of the map $\varepsilon \mapsto |D(g+\varepsilon f)|$:
\begin{equation}\label{eq:ConvexSlope}
|D(g+(\lambda \varepsilon_0+(1-\lambda) \varepsilon_1)f)|\leq \lambda |D(g+\varepsilon_0 f)|+(1-\lambda) |D(g+\varepsilon_1 f)|,
\end{equation}
for any $\lambda \in [0,1]$, $\varepsilon_0,\varepsilon_1 \in \R$.

As noticed by N. Gigli  (see Section 3.1 in \cite{G12}; see also \cite{GM12}) in the slightly different framework of weak upper gradients,  the convexity property \eqref{eq:ConvexSlope} is the key for defining the duality $Df(\nabla g)$ between the differential of $f$ and the gradient of $g$ (notice that in a metric space there is no notion of differential or gradient since the notion of tangent vector space is missing; we have just a notion of \textquotedblleft modulus of the differential of a function\textquotedblright, the point  is to exploit just this concept in order to define \textquotedblleft the differential of $f$ acting on the gradient of $g$\textquotedblright). The idea is the following: notice that if $\varphi:\R \to \R^+$  is a convex function, the values of 
$$\liminf_{\varepsilon \down 0} \frac{\varphi(\varepsilon)^p-\varphi(0)^p}{p \varepsilon \varphi(0)^{p-2}}, \quad  \limsup_{\varepsilon \up 0} \frac{\varphi(\varepsilon)^p-\varphi(0)^p}{p \varepsilon \varphi(0)^{p-2}}$$
are independent of $p \in (1,+\infty)$, and equal to $\varphi(0) \varphi'(0^+)$, $\varphi(0) \varphi'(0^-)$ respectively, as soon as $\varphi(0)\neq 0$. The convexity also grants that the limits exist and can be substituted by $\inf_{\varepsilon>0}$ and $\sup_{\varepsilon<0}$, respectively. Therefore we can give the following definition.

\begin{definition}[$D^{\pm} f(\nabla g)$] \label{def:Dfnablag}
Let $(X,\sfd)$ be a metric space and $f,g: X \to \R$  with finite slopes $|Df|,|Dg|: X \to [0,+\infty)$. The functions $D^{\pm}f (\nabla g):X \to \R$ are defined by 
\begin{eqnarray}
D^{+} f (\nabla g)&:=& \liminf_{\varepsilon \downarrow 0} \frac{|D(g+\varepsilon f)|^2-|D g|^2}{2 \varepsilon}, \label{def:D+fnablag}\\
D^{-} f (\nabla g)&:=& \limsup_{\varepsilon \uparrow 0} \frac{|D(g+\varepsilon f)|^2-|D g|^2}{2 \varepsilon}, \label{def:D-fnablag}
\end{eqnarray}
on $\{|D g| \neq 0 \}$, and are taken $0$ by definition on $\{|D g| =0 \}$.
\end{definition}

We remark  that the definition of $D^{\pm}f(\nabla g)$ is \emph{pointwise} while the analogous objects defined in \cite{G12} and \cite{GM12} using weak upper gradients were defined almost everywhere with respect to a reference measure (this was unavoidable since already the weak upper gradient is an object defined almost everywhere). Fixed  a point $x_0 \in X$, notice that for  the definitions \eqref{def:D+fnablag} and \eqref{def:D-fnablag} of  $D^{\pm} f (\nabla g)(x_0)$ to make sense, it is sufficient that $|D f|$ and $|D g|$ are finite just at the point $x_0$.

\begin{remark}
\rm{
Thanks to the discussion before Definition \eqref{def:Dfnablag}, it is clear that for every $p\in (1,\infty)$  we can equivalently set
\begin{eqnarray}
D^{+} f (\nabla g)&=& \liminf_{\varepsilon \downarrow 0} \frac{|D(g+\varepsilon f)|^p-|D g|^p}{p \varepsilon |D g|^{p-2}}, \label{def:D+fnablagp}\\
D^{-} f (\nabla g)&=& \limsup_{\varepsilon \uparrow 0} \frac{|D(g+\varepsilon f)|^p-|D g|^p}{p \varepsilon |D g|^{p-2}}. \label{def:D-fnablagp}
\end{eqnarray}
Moreover the limits in the definitions exist and can be substituted by $\inf_{\varepsilon>0}$ ($\sup_{\varepsilon<0}$ respectively}
\fr
\end{remark}

Notice that the inequality $|D(g+\varepsilon f)|\leq |D g|+|\varepsilon |\,|D f|$ yields
\begin{equation}\label{Schwartz}
|D^{\pm}f(\nabla g)| \leq |D f|\;|D g|,
\end{equation}
and in particular  if $f,g$ are Lipschitz (resp. locally Lipschitz) then $D^{\pm}f(\nabla g)$ is bounded (resp.  locally bounded). Moreover, the convexity of $f \mapsto |D(g+f)|^2$ gives 
\begin{equation}\label{eq:D-<D+}
D^-f(\nabla g) \leq D^+ f (\nabla g).
\end{equation}
Also, from the definition it directly follows that
\begin{equation}\label{eq:sign}
D^\pm  (-f)(\nabla g)=D^\pm f(\nabla(-g)), \quad D^\pm  (-f)(\nabla g)=-D^{\mp}f(\nabla g),
\end{equation}
and that
\begin{equation}\label{eq:Dg2}
D^{\pm}g(\nabla g)=|D g|^2.
\end{equation}

\subsection{The definition of angle cone and some examples}\label{Subsec:DefAngleCone}
Given a metric space $(X,\sfd)$ and two points  $p,q \in X$, consider the distance functions 
\begin{equation} \label{eq:rprq}
r_p(\cdot):=\sfd(p,\cdot),\quad r_q(\cdot):=\sfd(q,\cdot).
\end{equation}
Since both $r_p$ and $r_q$ are $1$-Lipschitz on $X$, recalling Definition \ref{def:Dfnablag}, for every $x \in X$ we can define
\begin{equation}\label{eq:cpm}
c^+_{pxq}:=D^+ r_p(\nabla r_q) (x), \quad c^-_{pxq}:= D^- r_p(\nabla r_q)(x). 
\end{equation}
The estimate \eqref{Schwartz} ensures that $c^+_{pxq}, c^-_{pxq} \in [-1,1]$, hence, as the function $\arccos: [-1,1]\to [0, \pi]$ is strictly decreasing, it makes sense to set
\begin{equation}\label{eq:angle}
\angle^-_{pxq}:= \arccos c^+_{pxq},  \angle^+_{pxq}:= \arccos c^-_{pxq},
\end{equation}
and to define the angle cone between $p,x,q$ as follows.
\begin{definition}\label{def:anglecone(X,d)}
Let $(X,\sfd)$ be a metric space and consider a triple $(p,x,q) \in X^3$. The \emph{angle cone} of  $(p,x,q)$, denoted with $\bold{\angle_{pxq}}$, is the interval
\begin{equation}\label{eq:anglecone(X,d)}
\bold{\angle_{pxq}}:= [\angle^-_{pxq}, \angle^+_{pxq}] \subset [0,\pi],
\end{equation}  
where $\angle^-_{pxq}, \angle^+_{pxq}$ are defined above in \eqref{eq:angle}.
\end{definition}

\begin{remark} 
\rm{
The case $x=p$ or $x=q$ is degenerate. Since the geometrically interesting case is $x \neq p,q$, we will always make this assumption in the rest of the paper.
}
\fr
\end{remark}
In order to get familiar with the definition, let us discuss immediately some examples which also show that this notion of angle cone formalizes what the intuition suggests. 

\begin{example}[In a Riemannian manifold the angle is single valued outside the cut locus] \label{ex:Mg}
{\rm  Given a Riemannian $n$-manifold $(M^n,g)$ and $p,x,q \in M$ such that $x \neq p,q$ is not in the cut locus of $p$ and $q$, it is clear that the distance functions $r_p:=\sfd(p,\cdot)$ and $r_q:=\sfd(q,\cdot)$ are smooth in a neighborhood of $x$ and it  is easy to check that 
\begin{equation} \label{eq:angleMg}
\cos(\angle^-_{pxq})=\cos(\angle^+_{pxq})=g_x(\nabla r_p(x), \nabla r_q (x)),
\end{equation}
$\nabla r_p(x), \nabla r_q (x)$ being the gradients of $r_p,r_q$ evaluated at  $x$. Since the cut locus has null measure with respect to the Riemannian volume measure (more precisely it is $(n-1)$ rectifiable, see \cite{MantMenn}), it follows that given $p,q \in X$ the angle between $p,q$ and $x$ is  single valued for a.e.-$x \in M$ and his value is given by \eqref{eq:angleMg}.

Observe that if $(M,g)$ is a complete, simply connected Riemannian manifold with non positive sectional curvature   (i.e. $(M,g)$ is an Hadamard manifold), then by Hadamard Theorem (see for instance Theorem 3.1 in \cite{DoC}) the cut locus of every point is empty; therefore in this case the angle of any triple of points $p,x,q \in M$ is single valued and given by formula  \eqref{eq:angleMg}.
} \fr
\end{example}

As sketched intuitively in the Introduction, in general the angle between three points can be multivalued even in the case the metric space $(X,\sfd)$ is a smooth Riemannian manifold, if $x$ is in the cut locus of $p$ or $q$. Now, thanks to the introduced concept of angle cone, we can make rigorous this intuition.

\begin{example}[Non uniqueness of the angle caused by the cut locus]\label{Ex:S2}
{ \rm 
Let $(X,\sfd)$ be the standard 2-dimensional sphere $S^2$ of unit radius centered in the origin of $\R^3$   with the distance induced by the usual round Riemannian metric. Let $x=(0,0,1)$ be the north pole, $q=(0,0,-1)$ be the south pole and $p=(1,0,0)$. Since $x$ is in the cut locus of $q$, it is natural to expect that the angle between $p,x,q$ is not single valued; more precisely, since any half great circle joining $x$ and $q$ is a length minimizing geodesic and since the gradient of $r_q$ is the tangent vector to the minimizing geodesic, then intuitively all the tangent space $T_xS^2$ is spanned by the different gradients of $r_q$ coming from all the different minimizing geodesics. Therefore it is natural to expect that \emph{any} angle is in the angle cone of $pxq$. 

Using the introduced notions we can easily make rigorous  this intuition: a straightforward computation using just the definition of slope \eqref{eq:slope} and Definition \ref{def:Dfnablag}  yields (for computing the first quantity consider  variations in the direction from $x$ towards $p$, for the second one consider  variations in the opposite direction)
$$D^+r_p(\nabla r_q)=1, \quad D^-r_p(\nabla r_q)=-1;$$
hence  definitions \eqref {eq:angle} and \eqref{eq:anglecone(X,d)} give that the angle cone $\bold{\angle_{pxq}}$ coincides with $[0,\pi]$, confirming the intuition. } \fr
\end{example} 

The second cause for the non uniqueness of the angle is the  potential lack of strict convexity of the space, this is best understood with an example.

\begin{example}[Non uniqueness of the angle caused by the non strict convexity of the  norm]\label{Ex:RdueLinfty}
{ \rm 
Let $(X,\sfd)$  be $\R^2$ endowed  with the $L^\infty$ norm on its coordinates and take $p=(1,0)$, $q=(0,1)$ and $x=(1,1)$. Notice that any vector of the form $(v_1,1), v_1\in [-1,1]$, can be called gradient  of $r_p$ at $x$. Indeed all of them have unit norm and the derivative of $r_p$ at $x$ along any of them is $1$, $1$ being also the dual norm of the differential of $r_p$. Analogously, all the vectors of the form $(1,v_2), v_2 \in [-1,1]$, can be called gradient of $r_q$ at $x$. Taking the (euclidean) angles between all the possible pairs $(v_1,1),(1,v_2)$ with $v_1,v_2 \in [-1,1]$, we span all the interval $[0,\pi]$; it is therefore natural to expect that the angle cone $\bold{\angle_{pxq}}$ is the whole $[0,\pi]$.

This is confirmed by a straightforward computation: taking variations in the direction of the vector $(1,1)$, respectively $(1,-1)$, we obtain using just the definition of slope \eqref{eq:slope} and Definition \ref{def:Dfnablag} that
$$D^+r_p(\nabla r_q)=1, \quad D^-r_p(\nabla r_q)=-1;$$
hence  definitions \eqref {eq:angle} and \eqref{eq:anglecone(X,d)} give that the angle cone $\bold{\angle_{pxq}}$ coincides with $[0,\pi]$ confirming the intuition. } \fr
\end{example} 

Since in both the examples the angle cone is completely degenerate, i.e. it is the whole $[0,\pi]$, it is a natural question if this is always case when non uniqueness occurs. This is false in both the situations, i.e. non uniqueness by cut locus and non uniqueness by lack of strict convexity. In case of Example \ref{Ex:S2} it is enough to replace the whole sphere $S^2$ by a sector containing $p=(1,0,0)$ delimited by two half great circles  having $x=(0,0,1), q=(0,0,-1)$ as endpoints; in this case the angle cone depends on the amplitude and on the position of the sector with respect to $p=(1,0,0)$. In case of Example \ref{Ex:RdueLinfty}, keeping fixed $p=(1,0)$ and $q=(0,1)$ and varying $x\in \R^2$ it is easy to see that also the angle cone varies.  

\subsection{Some properties of the angle cone}\label{Subsec:Properties}
In this subsection we prove two basic properties of the angle cone: the first says that for every $p,x \in X$, the angle cone $\bold{\angle_{pxp}}$ is single valued, and if moreover $(X,\sfd)$ is a length space then the angle is actually null; the second says that the angle cone is invariant under homotheties of the metric. In order to prove the first property we need a lemma concerning the following property of length spaces: while in a general metric space, using the triangle inequality in the definition of slope \eqref{eq:slope}, one has just $|D r_p|\leq 1$, in length spaces actually it holds has $|D r_p|\equiv 1$.

\begin{lemma}\label{lem:Dr}
Let $(X,\sfd)$ be a metric space,  fix $p \in X$ and consider $r_p(\cdot):=\sfd(p,\cdot)$. Then  
$$|Dr_p|\leq 1\quad \text{on  } X.$$
In case $(X,\sfd)$ is a length space, then 
$$|Dr_p|\equiv 1\quad \text{on  } X.$$
\end{lemma}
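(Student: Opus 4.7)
The upper bound is immediate: for any $y \neq x$, the reverse triangle inequality gives $|r_p(y)-r_p(x)| = |\sfd(p,y)-\sfd(p,x)| \leq \sfd(x,y)$, so dividing by $\sfd(x,y)$ and passing to the $\limsup$ as $y\to x$ yields $|Dr_p|(x) \leq 1$. This argument uses only the triangle inequality and is valid in every metric space (and trivially at isolated points, where the slope is $0$ by convention).

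For the length space part, the plan is to show the matching lower bound $|Dr_p|(x) \geq 1$ at every $x\in X$. The case $x=p$ is handled directly: $\frac{|r_p(y)-r_p(p)|}{\sfd(y,p)} = \frac{\sfd(p,y)}{\sfd(p,y)} = 1$ for any $y\neq p$, and $p$ is not isolated because by the length space property one can join $p$ to any other point of $X$ by arbitrarily short continuous curves passing near $p$. So I focus on $x\neq p$ and aim to construct a sequence $y_n\to x$ along which the difference quotient of $r_p$ tends to $1$.

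The main idea is to walk back from $x$ toward $p$ along an almost-minimizing curve. For $\varepsilon>0$ choose a continuous curve $\gamma:[0,1]\to X$ from $p$ to $x$ with length $L(\gamma)\leq \sfd(p,x)+\varepsilon$. Using $\sfd(p,\gamma(t))+\sfd(\gamma(t),x) \leq L(\gamma|_{[0,t]})+L(\gamma|_{[t,1]}) = L(\gamma)$, together with the triangle inequality $\sfd(p,x) \leq \sfd(p,\gamma(t))+\sfd(\gamma(t),x)$, one obtains
\begin{equation*}
r_p(x)-r_p(\gamma(t)) \;\geq\; \sfd(\gamma(t),x)-\varepsilon.
\end{equation*}
The function $t\mapsto \sfd(\gamma(t),x)$ is continuous, equals $\sfd(p,x)$ at $t=0$ and $0$ at $t=1$, so for every small $\delta\in(0,\sfd(p,x))$ the intermediate value theorem produces $t_\delta$ with $\sfd(\gamma(t_\delta),x)=\delta$.

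Taking $\varepsilon=\delta^2$ and $y_\delta:=\gamma(t_\delta)$, the previous inequality yields
\begin{equation*}
\frac{|r_p(y_\delta)-r_p(x)|}{\sfd(y_\delta,x)} \;\geq\; 1-\frac{\varepsilon}{\delta} \;=\; 1-\delta.
\end{equation*}
As $\delta\downarrow 0$ we get $y_\delta\to x$, so the definition of slope \eqref{eq:slope} gives $|Dr_p|(x)\geq 1$, which combined with the first part forces equality. The only mildly delicate step is the interplay between the approximation error $\varepsilon$ and the chosen distance $\delta$: one must keep $\varepsilon/\delta\to 0$ while keeping $\delta$ small enough so that points with $\sfd(y,x)=\delta$ actually exist on the approximating curve, and this is precisely what the coupling $\varepsilon=\delta^2$ achieves.
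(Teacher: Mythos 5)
Your proof is correct and follows essentially the same strategy as the paper's: the triangle inequality for the upper bound, and for the lower bound a walk back toward $p$ along an almost length-minimizing curve, using $\sfd(p,\gamma(t))+\sfd(\gamma(t),x)\leq L(\gamma)\leq \sfd(p,x)+\varepsilon$ and then coupling the approximation error to the distance from $x$ so that their ratio vanishes (your $\varepsilon=\delta^2$ plays the role of the paper's $1/n_k$ versus $1/\sqrt{n_k}$). Your version is in fact a bit cleaner at the final step, since selecting $y_\delta$ at exact distance $\delta$ via the intermediate value theorem makes the estimate $1-\varepsilon/\delta=1-\delta$ immediate, whereas the paper's nested $\limsup$/subsequence bookkeeping obscures the same computation.
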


\begin{proof}
The first part of the lemma is a straightforward application of the triangle inequality in the definition \eqref{eq:slope} of the slope.

Let us first give the proof of the second claim in the easy case when $(X,\sfd)$ is a geodesic space; the idea in the more general case of length spaces is the same, the technique being a bit more involved. Let $\gamma:[0,1]\to X$ be a length minimizing geodesic  from $p$ to $x$, then 
$$
|D r_p|(x)=\limsup_{y \to x} \frac{|\sfd(p,x)-\sfd(p,y)|}{\sfd(x,y)}\geq \limsup_{t \to 1} \frac{|\sfd(p,x)-\sfd(p,\gamma(t))|}{\sfd(x,\gamma(t))}=\limsup_{t \to 1} \frac{\sfd(x,\gamma(t))}{\sfd(x,\gamma(t))}=1,
$$
where the first equality is just the definition of slope \eqref{eq:slope} applied to  the function $r_p(\cdot):=\sfd(p,\cdot)$, the inequality comes from the fact that by construction $\lim_{t \to 1}\gamma(t)=x$, and finally we used that every segment of a length minimizing geodesic minimizes the length. Hence, in this case, $|D r_p|(x)=1$.
 
Now we prove the lemma in case $(X,\sfd)$ is a length space.  Let $\gamma_n\in C([0,1], X)$ be such that $\gamma_n(0)=p$, $\gamma_n(1)=x$ and $\length(\gamma_n)\leq \sfd(p,x)+\frac{1}{n}$.
Then  
\begin{eqnarray}
|D r_p|(x)&:=& \limsup_{y\to x}  \frac{|\sfd(p,x)-\sfd(p,y)|}{\sfd(x,y)}\geq \limsup_{n\to \infty} \limsup_{t \to 1} \frac{|\sfd(p,x)-\sfd(p,\gamma_n(t))|}{\sfd(x,\gamma_n(t))} \nonumber \\
&=& \lim_{k\to \infty} \lim_{i\to \infty}  \frac{|\sfd(p,x)-\sfd(p,\gamma_{n_k}(t_{i,k}))|}{\sfd(x,\gamma_{n_k}(t_{i,k}))}; \label{eq:Dr1}
\end{eqnarray}
where, in the last equality we chose sequences $(n_k)$ and $(t_{i,k})$ realizing the $\limsup$s. Now let us choose a subsequence $i_k$ such that for every $k$ the following hold:
\begin{eqnarray}
\sfd(x, \gamma_{n_k}(t_{i_k,k}))&\leq& \frac{1}{\sqrt{n_k}} , \label{eq:sqrtnk} \\
\left(\lim_{i\to \infty} \frac{|\sfd(p,x)-\sfd(p,\gamma_{n_k}(t_{i,k}))|} {\sfd(x,\gamma_{n_k}(t_{i,k}))} \right) &-&  \frac{|\sfd(p,x)-\sfd(p,\gamma_{n_k}(t_{i_k,k}))|} {\sfd(x,\gamma_{n_k}(t_{i_k,k}))} \leq \frac{1}{n_k}. \nonumber
\end{eqnarray}  
Therefore we can write \eqref{eq:Dr1} as
\begin{eqnarray} 
|D r_p|(x)&\geq&  \lim_{k\to \infty}  \frac{|\sfd(p,x)-\sfd(p,\gamma_{n_k}(t_{i_k,k}))|}{\sfd(x,\gamma_{n_k}(t_{i_k,k}))} \nonumber \\
          &\geq&  \limsup_{k \to \infty} \frac{\length(\gamma_{n_k})-\frac{1}{n_k}-\length\big({{\gamma_{n_k}}_|}_{[0,t_{i_k,k}]}\big)}{\sfd(x,\gamma_{n_k}(t_{i_k,k}))}; \label{eq:Dr2}
\end{eqnarray} 
where, in the last estimate,  we used that by construction $\sfd(p,x) \geq \length(\gamma_{n_k})-\frac{1}{n_k}$ and the trivial inequality $\sfd(p,\gamma_{n_k}(t_{i_k,k})) \leq \length\big({{\gamma_{n_k}}_|}_{[0,t_{i_k,k}]}\big)$. To conclude, observe that
\begin{equation}\nonumber
\length(\gamma_{n_k}) -\length\big({{\gamma_{n_k}}_|}_{[0,t_{i_k,k}]}\big)= \length\big({{\gamma_{n_k}}_|}_{[t_{i_k,k},1]}\big) \geq \sfd(\gamma_{n_k}(t_{i_k,k}), x);
\end{equation}	
which, plugged in \eqref{eq:Dr2} together with \eqref{eq:sqrtnk}, gives
$$|D r_p|(x)\geq \limsup_{k\to \infty} \frac{ \sfd(\gamma_{n_k}(t_{i_k,k}), x)-\frac{1}{n_k}}{\sfd(\gamma_{n_k}(t_{i_k,k}), x)} \geq 1-\lim_{k\to \infty} \frac{1/n_k}{1/{\sqrt{n_k}}}=1.$$
\end{proof}

\begin{proposition}[In length spaces $\bold{\angle_{pxp}}=\{0\}$]\label{prop:pxp=0}
Let $(X,\sfd)$ be a  metric space. Then for any $p,x \in X$ the angle cone $\bold{\angle_{pxp}}$ reduces to a single angle: $\bold{\angle_{pxp}}=\{\theta\}\subset[0,\pi]$. If moreover $(X,\sfd)$ is a length space then  $\bold{\angle_{pxp}}=\{0\}$
\end{proposition}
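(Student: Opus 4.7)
The proof should be essentially immediate from two ingredients already established: the identity $D^{\pm}g(\nabla g)=|Dg|^2$ (equation \eqref{eq:Dg2}) and the slope bound for distance functions given by Lemma \ref{lem:Dr}. The plan is to simply specialize these to $f=g=r_p$ and interpret the result through the definition of the angle cone.

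For the first assertion, I would fix $p,x\in X$ and compute $c^{\pm}_{pxp}=D^{\pm}r_p(\nabla r_p)(x)$. On the set $\{|Dr_p|\neq 0\}$, the identity \eqref{eq:Dg2} applied with $g=r_p$ gives directly
\[
c^+_{pxp}=D^+r_p(\nabla r_p)(x)=|Dr_p|^2(x)=D^-r_p(\nabla r_p)(x)=c^-_{pxp}.
\]
On the complementary set $\{|Dr_p|=0\}$, both quantities vanish by the convention in Definition \ref{def:Dfnablag}. In either case $c^-_{pxp}=c^+_{pxp}$, so $\angle^-_{pxp}=\angle^+_{pxp}$ and the angle cone collapses to a single angle, namely
\[
\bold{\angle_{pxp}}=\bigl\{\arccos(|Dr_p|^2(x))\bigr\}
\]
(with the understanding that when $|Dr_p|(x)=0$ the convention gives $\{\arccos 0\}=\{\pi/2\}$).

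For the second assertion, under the additional assumption that $(X,\sfd)$ is a length space, Lemma \ref{lem:Dr} yields $|Dr_p|\equiv 1$ on $X$. Plugging this into the formula above gives $c^{\pm}_{pxp}=1$, hence $\bold{\angle_{pxp}}=\{\arccos 1\}=\{0\}$, as desired.

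There is no real obstacle here; the work has already been done in establishing \eqref{eq:Dg2} (which encodes the ``Cauchy-Schwarz equality case'' built into the construction) and in Lemma \ref{lem:Dr} (which furnishes $|Dr_p|\equiv 1$ in the length-space case). The proposition is just the translation of those facts through Definition \ref{def:anglecone(X,d)}.
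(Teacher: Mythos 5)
Your proof is correct and follows the same route as the paper: apply \eqref{eq:Dg2} with $f=g=r_p$ to get $c^+_{pxp}=c^-_{pxp}=|Dr_p|^2(x)$, then invoke Lemma \ref{lem:Dr} to conclude $|Dr_p|\equiv 1$ in the length-space case. Your extra remark about the convention on $\{|Dr_p|=0\}$ is a harmless (and consistent) clarification that the paper leaves implicit.
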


\begin{proof}
Recalling the definition of $c^{\pm}_{pxq}$ in \eqref{eq:cpm} and the property \eqref{eq:Dg2}, we have
\begin{equation}\label{eq:cp=cm}
c^+_{pxp}=c^-_{pxp}=|D r_p|^2(x).
\end{equation}
By Lemma \ref{lem:Dr}, if $(X,\sfd)$ is a general metric space one has just $|D r_p|\leq 1$; but if $(X,\sfd)$ is a length space then $|D r_p|(x)=1$. The conclusion follows recalling that from the definition of angle cone \eqref{eq:anglecone(X,d)} (see also \eqref{eq:angle}) one has
$${\bold{\angle_{pxp}}}=\{\arccos (|D r_p|^2(x))\}.$$
\end{proof}

\begin{remark}\label{rem:pxp=0}
{\rm  Proposition \ref{prop:pxp=0} suggests that the natural class of  metric spaces for which our definition of angle cone has a geometric relevance are the length spaces. 
} \fr 
\end{remark}
\medskip

A desirable property of the angles is the invariance under homothety, i.e. under constant rescaling of the metric; this is the content of the next proposition.

\begin{proposition}[Invariance of the angle cone under constant rescaling of the metric]\label{prop:homothety}
Let $(X,\sfd)$ be a metric space, fix $\lambda>0$ and consider the new metric $\sfd_\lambda(\cdot,\cdot):= \lambda \sfd(\cdot,\cdot)$ on $X$. Then for every triple of points $p,q,x\in X$, the angle cones evaluated with respect the two different metrics $\sfd(\cdot,\cdot)$ and $\sfd_{\lambda}(\cdot,\cdot)$ coincide:
$$
\bold{\angle_{pxq}}=\bold{\angle^\lambda_{pxq}},
$$
where $\bold{\angle^\lambda_{pxq}}$ is the angle cone of the triple $(p,x,q)\in X^3$ computed in the metric space $(X,\sfd_\lambda)$.
\end{proposition}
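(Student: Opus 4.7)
The plan is to reduce everything to two simple scaling facts about the slope and then invoke directly the definition of $D^\pm f(\nabla g)$. Concretely, I would first observe that rescaling the metric affects distance functions and slopes in opposite ways, so that the composite object $D^\pm r_p(\nabla r_q)$ is left unchanged.

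First I would establish the slope scaling identity: if $\sfd_\lambda = \lambda \sfd$ and $|Df|_\lambda$ denotes the slope computed with respect to $\sfd_\lambda$, then directly from the definition \eqref{eq:slope} one has
\[
|Df|_\lambda(x) \;=\; \limsup_{y\to x}\frac{|f(y)-f(x)|}{\sfd_\lambda(y,x)} \;=\; \frac{1}{\lambda}\,|Df|(x)
\]
for every $f:X\to\R$ and every $x\in X$ which is non-isolated (and the isolated case is trivial, since isolation is unchanged by the homothety).

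Next I would identify the new distance functions: writing $r^\lambda_p(\cdot):=\sfd_\lambda(p,\cdot)$ and $r^\lambda_q(\cdot):=\sfd_\lambda(q,\cdot)$, we obviously have $r^\lambda_p = \lambda r_p$ and $r^\lambda_q = \lambda r_q$. Combining this with the slope scaling identity, for every $\varepsilon\in\R$
\[
|D(r^\lambda_q + \varepsilon r^\lambda_p)|_\lambda \;=\; \frac{1}{\lambda}\,|D(\lambda r_q + \varepsilon\,\lambda r_p)| \;=\; |D(r_q + \varepsilon r_p)|,
\]
since the slope is positively $1$-homogeneous in its argument. In particular $|Dr^\lambda_q|_\lambda = |Dr_q|$, so the sets $\{|Dr_q|=0\}$ and $\{|Dr^\lambda_q|_\lambda=0\}$ coincide, and outside this common set Definition \ref{def:Dfnablag} gives
\[
D^{+}r^\lambda_p(\nabla r^\lambda_q)(x) \;=\; \liminf_{\varepsilon\downarrow 0}\frac{|D(r^\lambda_q+\varepsilon r^\lambda_p)|_\lambda^{2}-|Dr^\lambda_q|_\lambda^{2}}{2\varepsilon} \;=\; \liminf_{\varepsilon\downarrow 0}\frac{|D(r_q+\varepsilon r_p)|^{2}-|Dr_q|^{2}}{2\varepsilon} \;=\; D^{+}r_p(\nabla r_q)(x),
\]
and identically for $D^{-}$. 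On the common zero set, both objects are set to $0$ by convention, so they still agree.

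Finally, plugging these equalities into \eqref{eq:cpm} and \eqref{eq:angle} gives $c^{\pm,\lambda}_{pxq} = c^{\pm}_{pxq}$ and thus $\angle^{\pm,\lambda}_{pxq} = \angle^{\pm}_{pxq}$, whence $\bold{\angle^\lambda_{pxq}} = \bold{\angle_{pxq}}$ by Definition \ref{def:anglecone(X,d)}. There is no real obstacle here; the only point requiring care is the bookkeeping of which slope (the old or the new) one is computing, but once the two scaling identities are in place the $\lambda$ factors cancel exactly and the result is immediate.
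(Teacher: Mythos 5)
Your proof is correct and follows essentially the same route as the paper's: both rest on the observation that the factor $\lambda$ coming from $r^\lambda_p=\lambda r_p$ cancels against the factor $\lambda$ in $\sfd_\lambda=\lambda\,\sfd$ inside the difference quotient defining the slope. If anything you are slightly more explicit than the paper, which verifies the cancellation only for $|Dr^\lambda_p|_\lambda$ and then asserts that the conclusion follows, whereas you check it directly for the combinations $r^\lambda_q+\varepsilon r^\lambda_p$ that actually enter Definition \ref{def:Dfnablag}, and you also note that the sets $\{|Dr_q|=0\}$ and $\{|Dr^\lambda_q|_\lambda=0\}$ coincide.
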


\begin{proof}
Called $r^\lambda_p(\cdot):=\sfd_\lambda(p,\cdot)$ and $|D r^\lambda_p|_\lambda$ the slope of $r^\lambda_p$ in the metric space $(X,\sfd_\lambda)$, directly from the definition of slope \eqref{eq:slope}, for any $x \in X$ we get  
$$
|D r^\lambda_p|_\lambda(x)=\limsup_{y\to x} \frac{|r^\lambda_p(x)-r^\lambda_p(y)|}{\sfd_\lambda(x,y)}= \limsup_{y\to x} \frac{\lambda |r_p(x)-r_p(y)|}{\lambda \, \sfd(x,y)}=|D r_p|(x).
$$
Since the definition of angle cone \eqref{eq:anglecone(X,d)} (recall also Definition \ref{def:Dfnablag}, \eqref{eq:cpm} and \eqref{eq:angle}) involves just the slopes of $r_p, r_q$, and $r^\lambda_p, r^\lambda_q$ respectively, the conclusion follows.  
\end{proof}

\section{The notion of angle in metric measure spaces}\label{sec:(X,d,m)}

Let us start this section by some motivations coming from the smooth case. Let  $(M,g)$ be a smooth Riemannian manifold and  $p,x,q \in M$, let us denote with $C_p,C_q \subset M$ the cut loci of $p$ and $q$ respectively and with $\mm_g$ the Riemannian volume measure associated to $g$. As discussed in the previous examples \ref{ex:Mg} and \ref{Ex:S2} the following holds:
\begin{itemize}
\item if $x \notin(C_p \cup C_q \cup \{p,q\})$  then the angle cone $\bold{\angle_{pxq}}$ is single valued,
\item if $x \in C_p \cup C_q $  we cannot expect the angle cone $\bold{\angle_{pxq}}$ to be single valued,
\item $\mm_g(C_p\cup C_q \cup \{p,q\})=0$.
\end{itemize}
Therefore, fixed $p$ and $q$ in $M$, the best we can expect is that the angle cone  $\bold{\angle_{pxq}}$ is \emph{single valued for a.e. $x \in M$}.
\\

This observation suggests that, for studying the uniqueness of the angle among three points, it is natural to endow the metric space $(X,\sfd)$ with a measure $\mm$, obtaining  the  \emph{metric measure space}, $\mms$ space for short, $(X,\sfd,\mm)$.  In this section  we will always assume that 
\begin{equation}
\label{eq:mms}
\begin{split}
&\quad \quad \quad  \quad (X,\sfd) \text{ is  a complete separable metric space and} \\
&\mm  \text{  is a Borel locally finite non negative measure on } X \text{ with } \supp \mm= X,
\end{split}
\end{equation}
where by local finiteness we mean that for every $x \in X$ there exists a neighborhood   $U_x$ of $x$ such that $\mm(U_x)< \infty$, and $\supp \mm$ denotes the smallest closed subset where $\mm$ is concentrated. Notice that the assumption $\supp \mm= X$ can be always fulfilled by replacing $X$ with $\supp \mm$, but we make it in order to simplify some statements in the sequel. 

The measure $\mm$ is said to be   \emph{doubling} if  for some constant $C>0$ it holds
\begin{equation}\label{eq:doubling}
\mm(B_{2r}(x))\leq C\mm(B_r(x)),\qquad\forall x\in X,\ r>0.
\end{equation}
Let $p_0\geq 1$. We say that $(X,\sfd,\mm)$ supports a weak local $(1,p_0)$-Poincar\'e inequality (or more briefly a $p_0$-Poincar\'e inequality) if there exist constants $C_{PI}$ and $\lambda\geq 1$ such that for all  $x\in X$, $r>0$ and Lipschitz functions $f:X\to\R$ it holds
\begin{equation}\label{LPI}
\frac1{\mm(B_r(x))}\int_{B_r(x)} |f-f_{B_r(x)}| \, \d \mm \leq C_{PI}\, 2r \left(\frac{1}{\mm(B_{\lambda r}(x))} \int_{B_{\lambda r}(x)} |D f|^{p_0} \, \d \mm \right)^{\frac{1}{p_0}},
\end{equation}
where $f_{B_r(x)}:=\frac1{\mm(B_r(x))}\int _{B_r(x)} f \, \d \mm$. Notice that typically the Poincar\'e inequality is required to hold for integrable functions $f$ and upper gradients $G$ (see for instance Definition 4.1 in \cite{Bjorn}), rather than for Lipschitz functions and their slope. Since the slope is an upper gradient, it is obvious that the second formulation implies the one we gave, but also the converse implication holds, as a consequence of the density in energy of Lipschitz functions in the Sobolev spaces proved in \cite{AmbrosioGigliSavare12} for the case $p_0>1$ and in \cite{Ambrosio-DiMarino} for $p_0=1$.  Therefore the definition we chose is equivalent to the standard one. Let us remark that this equivalence was proven earlier in \cite{HeinKos} for proper, quasiconvex and doubling metric measure spaces, while in \cite{Kos} (choosing $X = \R^n \setminus E$ for suitable compact sets $E$) it is proven that completeness of the space cannot be dropped.
\medskip 

\subsection{The case of infinitesimally strictly convex and infinitesimally Hilbertian spaces}
The notions of \emph{infinitesimally strictly convex} and \emph{infinitesimally Hilbertian} $\mms$  have been introduced in \cite{G12}  and \cite{Ambrosio-Gigli-Savare11b}  respectively (see also \cite{AGMR12}, \cite{Ambrosio-Gigli-Savare11} and \cite{GM12}) in the  framework of weak upper gradients, here we give a more elementary definition involving just the slope of Lipschitz functions.  In  doubling $\mms$  satisfying a $p_0$-Poincar\'e inequality, for some $p_0\in (1,2)$,  our definitions in terms of the slope are equivalent to the corresponding ones in terms of the weak upper gradients; for a discussion  see Remark \ref{rem:Equiv}.  
In order not to create ambiguity in literature, we call Lipschitz-infinitesimally strictly convex and Lipschitz-infinitesimally Hilbertian the notions in terms of the slope; the precise definition follows.
\begin{definition}\label{def:Infin}
Let $(X,\sfd,\mm)$ be a $\mms$  as in \eqref{eq:mms}. 
\begin{itemize}
\item  We say that $(X,\sfd,\mm)$ is \emph{Lipschitz-infinitesimally strictly convex} if for every $f,g \in \LIP(X)$ it holds
\begin{equation}\label{eq:ISC}
D^+ f(\nabla g)= D^-f (\nabla g) \quad \text{ $\mm$-a.e. in }  X.
\end{equation} 
the objects $D^\pm f(\nabla g)$ being defined in \eqref{def:Dfnablag}.
\item  $(X,\sfd,\mm)$ is said \emph{Lipschitz-infinitesimally Hilbertian} if  for every $f,g \in \LIP(X)$ it holds
\begin{equation}\label{eq:IH}
D^+ f(\nabla g)= D^-f (\nabla g)=D^-g (\nabla f)= D^+ g (\nabla f)\quad \text{$\mm$-a.e. in }   X;
\end{equation} 
in this case the common value above is denoted with $\nabla f \cdot \nabla g$.
\end{itemize}
\end{definition}
Notice that, in particular, Lipschitz-infinitesimally Hilbertian spaces are Lipschitz-infinitesimally strictly convex. Observe also that from  inequality \eqref{eq:D-<D+} it follows that the $\mm$-a.e. equality \eqref{eq:defISC} is equivalent to the integral inequality
\begin{equation}\label{eq:defISC}
\int \big( D^+ f(\nabla g)-D^- f(\nabla g) \big) \, \d \mm \leq 0.
\end{equation}

\begin{remark}[A condition equivalent to  Lipschitz-Inf. Hilbertianity]\label{rem:quadratic}
 {\rm 
Following the arguments in the proof of Proposition 4.20 in \cite{G12} (which in turns is an adaptation of  Section 4.3 of \cite{Ambrosio-Gigli-Savare11b}) it is not difficult to see that $(X,\sfd,\mm)$ is  Lipschitz-infinitesimally Hilbertian if and only if the map $f\mapsto \frac{1}{2} \int |Df|^2 \,\d \mm$ from $\{f \in \LIP(X): |D f| \in L^2(X,\mm)\}$ to $\R^+$ is quadratic, which in turn is equivalent to ask that such a map satisfies the parallelogram identity. \rm} \fr
\end{remark}

The following remark was suggested by Luigi Ambrosio and Nicola Gigli, let me express my gratitude to them.

\begin{remark}[Relationship with the notions of inf. strict convexity and inf. Hilbertianity already present in literature]\label{rem:Equiv}
{\rm 
 As mentioned above, the notions of infinitesimally strictly convex and infinitesimally Hilbertian $\mms$  have been introduced in \cite{G12} and  \cite{Ambrosio-Gigli-Savare11b} respectively,  using the  language  of weak upper gradients; the definition we propose here involves just the more elementary concept of  slope of a Lipschitz function. Here we briefly comment on the relationship between the two approaches; we are not introducing all the terminology in order to keep the presentation short, the interested reader is referred to the aforementioned papers.

\underline{Inf. Hilbertianity}. For a general $\mms$ space $(X,\sfd,\mm)$ as in \eqref{eq:mms}, our definition of  Lipschitz-infinitesimal Hilbertianity  is slightly stronger than the corresponding one in   \cite{Ambrosio-Gigli-Savare11b}-\cite{G12} in terms of weak upper gradients. This is because the two definitions are equivalent to ask that the squared $L^2(X,\mm)$ norm of the slope (resp. of the weak upper gradient), called Cheeger energy, satisfy the  parallelogram identity (see Remark \ref{rem:quadratic} above, Definition 4.18 and Proposition 4.20 in \cite{G12}). Since the Cheeger energy in terms of weak upper gradients is the $L^2(X,\mm)$-lower semicontinuous relaxation  of the corresponding one in terms of the slopes (see Section 4 in \cite{Ambrosio-Gigli-Savare11}), it is not difficult to see that the parallelogram identity at the level of  slopes passes to the limit to the level of weak upper gradients. Then our definition using slopes implies the one in terms of weak upper gradients. 

The converse implication in general is not clear, but in case  $(X,\sfd,\mm)$ is a doubling $\mms$ space as in \eqref{eq:mms}, \eqref{eq:doubling} satisfying a $p_0$-Poincar\'e inequality \eqref{LPI} for some $p_0 \in (1,2)$, then the definitions are equivalent. 
This is a straightforward consequence of a celebrated result of Cheeger (Theorem 6.1 in  \cite{Cheeger00}, see also Theorem A.7 in \cite{Bjorn} and the recent revisitation of this part Cheeger's paper given in \cite{ACD}) telling that, for locally Lipschitz functions, the slope coincides with the weak upper gradient $\mm$-a.e.. 

\underline{Inf. strict convexity}.  In case $(X,\sfd,\mm)$ is a doubling $\mms$  satisfying a $p_0$-Poincar\'e inequality, $p_0\in(1,2)$, for the same argument above the definition in terms of weak upper gradients implies our definition in terms of slope of Lipschitz functions; the converse  implication, again in  doubling\&Poincar\'e spaces, follows by the aforementioned Theorem of Cheeger together with the Lusin approximation of Sobolev functions via locally Lipschitz functions (for the proof see for instance Theorem 5.1 in \cite{Bjorn}; see also Theorem 2.5 in \cite{GM12} for the precise statement we are referring to). Both the implications are open in  general $\mms$.
\\

Recall that measured-Gromov-Hausdorff limits of Riemannian manifolds with Ricci curvature bounded from below, and more generally the metric spaces satisfying the Curvature Dimension condition $CD(K,N)$ for some $K\in \R, N\in \R^+$, satisfy the aforementioned doubling and  Poincar\'e conditions on bounded subsets (see \cite{ChCo3}, Corollary 2.4 in \cite{Sturm06II}  and Theorem 1.2 in \cite{RajalaCalcVar}); hence on such spaces, the definition of infinitesimal strict convexity  and  of infinitesimal Hilbertianity in terms of the slope of Lipschitz functions is equivalent to the corresponding one in terms of weak upper gradients. 
} \fr
\end{remark}

\begin{remark}[Relevant examples of  Lipschitz-inf. strictly convex and Lipschitz-inf. Hilbertian spaces]\label{rem:ExIHS}
{\rm 

Many important classes of metric spaces are Lipschitz-inf. strictly  convex or  Lipschitz-inf. Hilbertian: 
\begin{itemize}

\item Smooth Riemannian manifolds are Lipschitz-infinitesimally Hilbertian.

\item In \cite{KMS00}, using key results of \cite{OS94} and \cite{Per} it has been proved that finite dimensional Alexandrov spaces with curvature bounded from below are Lipschitz-infinitesimally Hilbertian.

\item Measured Gromov-Hausdorff limits of Riemannian manifolds with Ricci curvature bounded from below are Lipschitz-infinitesimally Hilbertian, see Section 6.3 in \cite{Ambrosio-Gigli-Savare11b} and \cite{GigliMondinoSavare} for a deeper discussion on stability issues.

\item In case of normed spaces, Lipschitz-infinitesimal strict convexity is equivalent to strict convexity of the norm, which in turn is equivalent to the differentiability of the dual norm in the cotangent space. 

\end{itemize}
}\fr
\end{remark}

The following theorem  is a direct consequence of the definition of angle cone \eqref{eq:anglecone(X,d)} which in turn is given by \eqref{eq:cpm} and \eqref{eq:angle}, and of  Definition \ref{def:Infin}.
 
\begin{theorem}\label{thm:Infin}
Let $(X,\sfd,\mm)$ be a $\mms$ as in \eqref{eq:mms}, and let $p,q \in X$. 

If $(X,\sfd,\mm)$ is Lipschitz-infinitesimally strictly convex then, for $\mm$-a.e. $x \in X$, the angles $\bold{\angle_{pxq}}$ and $\bold{\angle_{qxp}}$ are single valued; i.e.  
$$\bold{\angle_{pxq}}= \{\angle_{pxq}\}, \quad \bold{\angle_{qxp}}= \{\angle_{qxp}\}.$$  

If moreover $(X,\sfd,\mm)$ is Lipschitz-infinitesimally Hilbertian then, for $\mm$-a.e. $x \in X$, not only the angles $\bold{\angle_{pxq}}$ and $\bold{\angle_{qxp}}$ are single valued but they also coincide; i.e.  
$$\bold{\angle_{pxq}}= \{\angle_{pxq}\}=\{\angle_{qxp}\}= \bold{\angle_{qxp}}.$$   
\end{theorem}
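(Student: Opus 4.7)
The plan is to unfold the definition of the angle cone and apply Definition \ref{def:Infin} directly to the distance functions. First I note that $r_p := \sfd(p,\cdot)$ and $r_q := \sfd(q,\cdot)$ are $1$-Lipschitz on $X$, hence belong to $\LIP(X)$; in particular the quantities $D^\pm r_p(\nabla r_q)$ and $D^\pm r_q(\nabla r_p)$ are pointwise well defined and bounded (by \eqref{Schwartz}), so they are admissible test functions in Definition \ref{def:Infin}.

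For the first assertion, I recall from \eqref{eq:cpm}, \eqref{eq:angle} and \eqref{eq:anglecone(X,d)} that $\bold{\angle_{pxq}}$ collapses to a single point at $x$ precisely when $D^+ r_p(\nabla r_q)(x) = D^- r_p(\nabla r_q)(x)$, and similarly for $\bold{\angle_{qxp}}$ with the roles of $p$ and $q$ swapped. The Lipschitz-infinitesimal strict convexity hypothesis, applied to the pair $f := r_p$, $g := r_q$, yields $D^+ r_p(\nabla r_q) = D^- r_p(\nabla r_q)$ $\mm$-a.e.; applied to the pair $f := r_q$, $g := r_p$, it yields the analogous identity needed for $\bold{\angle_{qxp}}$. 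Intersecting the two full-measure sets (which may a priori depend on $p$ and $q$) concludes the first part.

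For the second assertion, under the Lipschitz-infinitesimal Hilbertian hypothesis, Definition \ref{def:Infin} applied with $f := r_p$, $g := r_q$ gives the $\mm$-a.e.\ chain of equalities
\[
D^+ r_p(\nabla r_q) \;=\; D^- r_p(\nabla r_q) \;=\; D^- r_q(\nabla r_p) \;=\; D^+ r_q(\nabla r_p),
\]
i.e.\ $c^+_{pxq} = c^-_{pxq} = c^-_{qxp} = c^+_{qxp}$ $\mm$-a.e. Passing through $\arccos$ as in \eqref{eq:angle} immediately yields $\bold{\angle_{pxq}} = \{\angle_{pxq}\} = \{\angle_{qxp}\} = \bold{\angle_{qxp}}$ for $\mm$-a.e.\ $x \in X$.

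There is no real obstacle here: as the statement itself suggests, the theorem is essentially a translation of Definition \ref{def:Infin} into the language of angle cones, once one has verified that the distance functions $r_p,r_q$ are legitimate test functions (which is immediate from the triangle inequality). The only bookkeeping point is that the $\mm$-negligible exceptional set depends on the pair $(p,q)$, but this matches exactly the quantification in the statement.
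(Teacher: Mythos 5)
Your proposal is correct and follows exactly the route the paper intends: the paper offers no separate proof, stating only that the theorem is a direct consequence of \eqref{eq:cpm}, \eqref{eq:angle}, \eqref{eq:anglecone(X,d)} and Definition \ref{def:Infin}, which is precisely the unfolding you carry out with $f=r_p$, $g=r_q$ (and the roles swapped). Your additional remarks that $r_p,r_q$ are $1$-Lipschitz and hence admissible, and that the negligible set depends on $(p,q)$, are accurate and consistent with the quantifiers in the statement.
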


\subsection{The case $(X,\sfd,\mm)$ is a measured-Gromov-Hausdorff limit of Riemannian manifolds with Ricci curvature bounded from below}\label{Subsec:Honda}
In \cite{Honda12}, Honda gave a definition of angle on a Gromov-Hausdorff limit space of a sequence of complete $n$-dimensional Riemannian manifolds with a lower Ricci curvature bound and used that notion to introduce a weak second order differentiable structure on such spaces. The goal of the present subsection is to briefly recall the construction of Honda and to prove the equivalence almost everywhere with our definition. 

Throughout this subsection, $(X,\sfd, \mm, \bar{x})$ is a $\mms$ as in \eqref{eq:mms} which moreover is  a pointed measured-Gromov-Hausdorff limit space of a sequence of pointed complete $n$-dimensional Riemannian manifolds $\{(M_i,g_i,\bar{x}_i)\}_{i \in \N}$ with $\Ric_{(M_i,g_i)}\geq -(n-1)$. 
One of the main results in \cite{Honda11} and  \cite{Honda12}  is the following theorem. 
\begin{theorem}[Theorem 1.2 in \cite{Honda12} and Theorem 3.2 in \cite{Honda11}]\label{thm:Honda}
Let $p,q \in X\setminus\{\bar{x}\}$ with $\bar{x}\notin C_p\cup C_q$. Then one can define the angle $\angle(p,\bar{x},q)$ between $p$ and $q$ at $x$ as
\begin{equation}\label{eq:angle(pxq)}
\cos(\angle(p,\bar{x},q)):=\lim_{t\to 0} \frac{2t^2-\sfd^2(\gamma_{\bar{x},p}(t), \gamma_{\bar{x},q}(t))}{2t^2}, 
\end{equation}
for any unit speed  geodesic $\gamma_{\bar{x},p}$ from $\bar{x}$ to $p$ and $\gamma_{\bar{x},q}$ from $\bar{x}$ to $q$, where $C_p$ is the cut locus of $p$ defined by $C_p:=\{x \in X: \sfd(p,x)+\sfd(x,z)> \sfd(p,z) \text{ for every } z \in X\setminus\{x\}\}$.
\\Moreover,  the measure of the cut locus of any point of $X$ is null with respect to any limit measure $\mm$, therefore $\mm(C_p\cup C_q)=0$ and the angle $\angle(p,x,q)$ is well defined for $\mm$-a.e. $x \in X\setminus\{p,q\}$.
\end{theorem}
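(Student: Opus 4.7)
My plan is to deduce the theorem from two structural results about measured-Gromov-Hausdorff Ricci limit spaces established by Cheeger-Colding: the $\mm$-a.e.\ existence of Euclidean tangent cones, and the local semi-concavity (hence $\mm$-a.e.\ differentiability) of distance functions. The existence and independence-of-choice of the limit in \eqref{eq:angle(pxq)} will be obtained by a blow-up argument at $\bar x$, while the measure-zero statement $\mm(C_p)=0$ will follow from the differentiability theory.

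First I would fix $p,q\in X\setminus\{\bar x\}$ and, at the cost of discarding a null set, assume that $\bar x$ is a regular point of $(X,\sfd,\mm)$, i.e.\ the tangent cone at $\bar x$ exists and is isometric to a fixed Euclidean space $\R^k$; such points form a set of full $\mm$-measure by Cheeger-Colding theory (see \cite{ChCo3}). Let $\gamma_{\bar x,p}$ be any unit-speed minimizing geodesic from $\bar x$ to $p$. Rescaling the distance by $1/t$, the $1$-Lipschitz curve $s\mapsto\gamma_{\bar x,p}(ts)$, viewed in $(X,t^{-1}\sfd,\bar x)$, has uniformly bounded length on compact time intervals; by Arzel\`a-Ascoli in the pointed Gromov-Hausdorff sense it converges, along some $t_i\downarrow 0$, to a unit-speed geodesic ray $\tilde\gamma_p\colon[0,+\infty)\to T_{\bar x}X\cong\R^k$ starting at the origin. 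The hypothesis $\bar x\notin C_p$ means that $\gamma_{\bar x,p}$ sits as a proper sub-segment of some longer minimizing geodesic, so rescaling and passing to the limit produces a minimizing prolongation of $\tilde\gamma_p$ in the opposite direction of arbitrarily large length; since in $\R^k$ every bi-infinite minimizing geodesic is an affine line, $\tilde\gamma_p$ must be a half of a straight line through the origin. Its direction is then a well-defined unit vector $v_p\in\R^k$, independent of both the blow-up subsequence and of the choice of geodesic $\gamma_{\bar x,p}$. The same analysis applied to $\gamma_{\bar x,q}$ produces a unit vector $v_q\in\R^k$.

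The rescaled endpoints $\gamma_{\bar x,p}(t)$ and $\gamma_{\bar x,q}(t)$ converge in $T_{\bar x}X$ to $v_p$ and $v_q$, and $\sfd(\gamma_{\bar x,p}(t),\gamma_{\bar x,q}(t))/t\to |v_p-v_q|$ by definition of the tangent cone. Hence
\begin{equation*}
\lim_{t\to 0}\frac{2t^2-\sfd^2(\gamma_{\bar x,p}(t),\gamma_{\bar x,q}(t))}{2t^2}=\frac{2-|v_p-v_q|^2}{2}=v_p\cdot v_q\in[-1,1],
\end{equation*}
which defines $\cos\angle(p,\bar x,q)$ intrinsically and simultaneously proves existence of the limit and independence of the specific geodesics chosen. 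For the measure-zero claim $\mm(C_p)=0$, I would use that on a Ricci limit space $r_p$ is locally semi-concave on $X\setminus\{p\}$ (a property inherited from the approximating $M_i$ via Cheeger-Colding stability), hence differentiable $\mm$-a.e.\ in the sense that the blow-up is unique; at any such $x$ with Euclidean tangent cone the blow-up of $r_p$ is an affine function of unit gradient, and moving along the gradient direction for a small positive time produces a proper extension of a minimizing geodesic from $p$ through $x$, so that $x\notin C_p$.

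The main obstacle is the extendability step in the tangent cone: one must rule out the possibility that the limiting curve $\tilde\gamma_p$ is genuinely a ray and not half of a line, i.e.\ one needs \emph{non-extendability to be preserved} under rescaling to the tangent cone. In Alexandrov spaces with lower curvature bounds this follows trivially from Toponogov's theorem, but under only $\Ric\ge -(n-1)$ it requires a quantitative, uniform-in-$t$ lower bound on the extension length, to be obtained via the Cheeger-Colding segment inequality and a version of the almost-splitting theorem; this quantitative estimate is the technical heart of Honda's argument in \cite{Honda11,Honda12}. Without such control the limit in \eqref{eq:angle(pxq)} may genuinely fail to exist, as illustrated by the Colding-Naber example recalled in the Introduction.
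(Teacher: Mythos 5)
First, a point of framing: the paper does not prove this statement at all --- it is imported verbatim from Honda's work (Theorem 1.2 in \cite{Honda12}, Theorem 3.2 in \cite{Honda11}) and used as a black box, so there is no internal proof to compare yours against. Judged on its own terms, your outline assembles the right ingredients (blow-up to a Euclidean tangent cone, extendability of geodesics at non-cut points, a differentiation argument for the null-measure claim), but it has two genuine gaps, one of which you flag only partially and locate in the wrong place.

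The central gap is the assertion that $v_p$ is ``independent of the blow-up subsequence'' and hence that $\sfd(\gamma_{\bar x,p}(t),\gamma_{\bar x,q}(t))/t\to|v_p-v_q|$. The identification of the rescaled spaces $(X,t_i^{-1}\sfd,\bar x)$ with $\R^k$ exists only along subsequences and is not canonical; even when each blown-up geodesic is a full line, the \emph{relative} angle between the two limit lines may vary with the subsequence, and nothing in your argument prevents the pair of geodesics from ``rotating'' against each other as $t\downarrow 0$. This is exactly the phenomenon in the Colding--Naber example \cite{ColNab} recalled in the Introduction, which lives at a point whose every tangent cone is Euclidean and where every value of $\cos\theta\in[-1,1]$ arises as a subsequential limit of $\frac{2t_i^2-\sfd^2(\gamma_1(t_i),\gamma_2(t_i))}{2t_i^2}$; so regularity of $\bar x$ plus the fact that each blow-up is a line cannot suffice. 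Note also that the difficulty is not the one you name at the end: since $\bar x\notin C_p$ furnishes an extension of \emph{fixed} length $\ell=\sfd(\bar x,z)>0$, extendability survives rescaling for free ($\ell/t_i\to\infty$); what is missing is the mechanism by which the line forces uniqueness of the limit --- in Honda's argument this is the (almost) splitting theorem applied to the tangent cone, which identifies the blow-up of $r_p$ with a linear coordinate and thereby pins down the direction, together with the existence of the radial derivative $\lim_{t\to 0}\big(r_p(\gamma_{\bar x,q}(t))-r_p(\bar x)\big)/t$ proved via Bishop--Gromov-type monotonicity in \cite{Honda11}. A second, independent gap is the claim that $r_p$ is locally semi-concave ``inherited from the approximating $M_i$'': under $\Ric\geq-(n-1)$ one controls only $\Delta r_p$ (Laplacian comparison), not the full Hessian, so the semi-concavity constants of the distance functions on $M_i$ are not uniformly bounded and the property does not pass to the limit; your route to $\mm(C_p)=0$ therefore does not close, and Honda's actual proof of this point (that $\mm$-a.e.\ point lies in the interior of a minimizing geodesic from $p$) is the main content of \cite{Honda11}. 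Finally, by discarding the non-regular points you prove the first assertion only for $\mm$-a.e.\ $\bar x\notin C_p\cup C_q$, whereas the statement is pointwise on the whole complement of the cut locus (though the a.e.\ version is all the present paper needs).
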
	
Notice  that the definition of angle \eqref{eq:angle(pxq)} between $p$ and $q$ at $x$ is inspired by the corresponding notion in Alexandrov spaces \eqref{eq:angleAlex} discussed in the Introduction, but a priori \eqref{eq:angle(pxq)} is slightly weaker since Alexandrov definition asks that the joint limit for $s,t \to 0$ exists.

Honda also proved (see Corollary 4.2 in \cite{Honda12}) that for $p,q \in X$ and $\bar{x}\in X \setminus (C_p\cup C_q \cup\{p,q\})$ the limit
$$ \lim_{r\to 0} \frac{1}{\mm(B_r(\bar{x}))} \int_{B_r(\bar{x})} <\d r_p, \d r_q> \, \d \mm$$
exists and, by  using the Splitting Theorem, he shows that (see Theorem 4.3  in \cite{Honda12})
\begin{equation}\label{eq:anglem(pxq)}
\cos(\angle(p,\bar{x},q))=\lim_{r\to 0} \frac{1}{\mm(B_r(\bar{x}))} \int_{B_r(\bar{x})} <\d r_p, \d r_q> \, \d \mm,
\end{equation}  
$\angle(p,\bar{x},q)$ being defined in \eqref{eq:angle(pxq)}. Before proceeding with the discussions, let us briefly explain the meaning of the expression $<\d r_p, \d r_q>$ appearing in the last two formulas (for more details the interested reader may consult \cite{Cheeger00},  \cite{ChCo3} and \cite{HondaCAG}).

From the works of Cheeger \cite{Cheeger00} and Cheeger-Colding \cite{ChCo3} we know that $(X,\sfd,\mm, \bar{x})$, being a limit space as before, admits a cotangent bundle $T^* X$ endowed with a canonical scalar product (see Section 10 in \cite{Cheeger00} and Section 6 in \cite{ChCo3}; for a brief survey see also Section 2 in \cite{HondaCAG}). The cotangent bundle $T^* X$ satisfies the following properties:
\begin{enumerate}
\item $T^*X$ is a topological space.
\item There exists a Borel map $\pi: T^* X \to X $ such that $\mm(X\setminus \pi(T^* X)) = 0$.
\item $\pi ^{-1}  (x)$ is a finite-dimensional real vector space with a canonical scalar product $\nolinebreak[4]{<.,.>_x}$ for every $x\in \pi(T^* X)$.
\item For every open subset $U \subset X$ and every Lipschitz function $f$ on $U$, there exist a Borel subset $V \subset U$, and a Borel map $\d f$ (called the \emph{differential of $f$}) from $V$ to $T^* X$
such that $\mm(U \setminus V ) = 0$, 
\begin{equation}\label{eq:|df|=|Df|}
\pi \circ \d f (x) = x \quad \text{and} \quad |\d f |(x) = |Df|(x) \quad \text{for every } x \in V,
\end{equation} 
where $|v|(x) = \sqrt{<v,v>_x}$ and $|Df|$ is the slope defined in \eqref{eq:slope}.
\item For every open subset $U \subset X$ and every couple of Lipschitz functions $f,g$ on $U$, called  $V \subset U$ with $\mm(U \setminus V ) = 0$, $\d f, \d g$ the differentials of $f$ and $g$ from $V$ to $T^* X$ given by the previous point, the following holds: for every $\alpha, \beta \in \R$ the differential of the function 
$\alpha f+ \beta g$ is defined on $V$ and moreover 
\begin{equation}\label{eq:d(af+bg)}
\d(\alpha f+ \beta g)(x)=\alpha \d f (x)+ \beta \d g(x) \quad \text{for every } x \in V.
\end{equation}  
\end{enumerate}
Now we are in position to prove an easy but fundamental lemma about the equivalence $\mm$-a.e. of the infinitesimal Hilbertian structure of Cheeger-Colding  with the one of Definition \ref{def:Infin}. Recall that, being a limit space, $(X,\sfd,\mm)$ is Lipschitz-infinitesimally Hilbertian (see Remark \ref{rem:ExIHS}) so that for every $f,g \in \LIP(X)$ we have
\begin{equation}\label{eq:DfDg}
Df \cdot Dg:=D^+f (\nabla g)=D^-f (\nabla g)=D^+g (\nabla f)=D^-g (\nabla f) \quad \mm-a.e.,
\end{equation}
the object $D^\pm f(\nabla g)$ being introduced in Definition \ref{def:Dfnablag}.

\begin{lemma}[Equivalence of the infinitesimal Hilbertian structures]\label{lem:<>=Dnabla}
Let $(X,\sfd, \mm, \bar{x})$ be a $\mms$ as in \eqref{eq:mms} which moreover is  a pointed measured-Gromov-Hausdorff limit space of a sequence of pointed complete $n$-dimensional Riemannian manifolds $\{(M_i,g_i,\bar{x}_i)\}_{i \in \N}$ with $\Ric_{(M_i,g_i)}\geq -(n-1)$. Then for every $f,g:X \to \R$ locally Lipschitz it holds
\begin{equation}\label{eq:<>=Dnabla}
<\d f, \d g>_x= Df \cdot Dg (x) \quad \text{for $\mm$-a.e. } x \in X.
\end{equation}
\end{lemma}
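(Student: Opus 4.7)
The plan is to exploit the fact that on a measured Gromov--Hausdorff limit space the Cheeger--Colding differential is linear in its argument and its pointwise norm coincides with the slope, so that $|D(g+\varepsilon f)|^{2}$ becomes an exact quadratic polynomial in $\varepsilon$ whose derivative at $\varepsilon=0$ is precisely $2\langle \d f,\d g\rangle_{x}$.

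First I would localize: both sides of \eqref{eq:<>=Dnabla} are intrinsically local and the conclusion is $\mm$-a.e., so I may cover $X$ by countably many open sets on which $f$ and $g$ are Lipschitz and work on a single such open set $U$. Applying property~(4) of the cotangent bundle to the countable family $\{\,g,\,g+\tfrac{1}{n}f,\,g-\tfrac{1}{n}f:n\in\N\,\}$ and intersecting the countably many $\mm$-full subsets so produced, I obtain a Borel $V_{1}\subset U$, with $\mm(U\setminus V_{1})=0$, on which $|Dh|=|\d h|$ holds simultaneously for every $h$ in the above family. A further intersection with the $\mm$-full set delivered by property~(5) produces a Borel $W\subset V_{1}$ on which
\[
\d\bigl(g+\tfrac{1}{n}f\bigr)(x)=\d g(x)+\tfrac{1}{n}\d f(x)\qquad\forall\,n\in\N.
\]
Finally, since the limit space is Lipschitz-infinitesimally Hilbertian by Remark~\ref{rem:ExIHS}, I further intersect $W$ with the $\mm$-full set where $D^{+}f(\nabla g)=D^{-}f(\nabla g)=Df\cdot Dg$.

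On $W$ I then expand, using the canonical inner product on $\pi^{-1}(x)$ together with \eqref{eq:|df|=|Df|} and \eqref{eq:d(af+bg)},
\[
|D(g+\tfrac{1}{n}f)|^{2}(x)=\bigl|\d g(x)+\tfrac{1}{n}\d f(x)\bigr|^{2}=|Dg|^{2}(x)+\tfrac{2}{n}\langle \d f,\d g\rangle_{x}+\tfrac{1}{n^{2}}|\d f|^{2}(x),
\]
so the difference quotient appearing in \eqref{def:D+fnablag}, evaluated along $\varepsilon_{n}=1/n$, equals $\langle \d f,\d g\rangle_{x}+\tfrac{1}{2n}|\d f|^{2}(x)$. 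The convexity \eqref{eq:ConvexSlope} of $\varepsilon\mapsto|D(g+\varepsilon f)|$ makes $\varepsilon\mapsto|D(g+\varepsilon f)|^{2}$ convex as well, so the relevant difference quotient is non-decreasing in $\varepsilon>0$ and the $\liminf_{\varepsilon\downarrow 0}$ in \eqref{def:D+fnablag} is an honest limit, computable along any sequence $\varepsilon_{n}\downarrow 0$. Sending $n\to\infty$ therefore gives $Df\cdot Dg(x)=\langle \d f,\d g\rangle_{x}$ on $W\cap\{|Dg|\neq 0\}$; on $W\cap\{|Dg|=0\}$ one has $|\d g|(x)=0$, hence $\d g(x)=0$ in $\pi^{-1}(x)$, so both sides vanish by definition.

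The only real obstacle is the well-known one: property~(4) provides an $\mm$-null exceptional set \emph{depending} on the chosen Lipschitz function, so a priori the identity $|Dh|=|\d h|$ is not available simultaneously for the uncountably many choices $h=g+\varepsilon f$. Convexity turns this difficulty into a non-issue, since it allows the limit to be read along the countable sequence $\varepsilon_{n}=1/n$, for which only countably many applications of properties~(4) and~(5) are needed.
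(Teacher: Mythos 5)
Your proof is correct and follows essentially the same route as the paper's: identify $|D(g+\varepsilon f)|(x)$ with $|\d g(x) + \varepsilon\,\d f(x)|$ via properties (4)--(5) of the Cheeger--Colding cotangent bundle, expand the square using the canonical scalar product, and pass to the limit in the difference quotient \eqref{def:D+fnablag}, invoking Lipschitz-infinitesimal Hilbertianity to identify the result with $Df\cdot Dg$. The only difference is that you make explicit the reduction to the countable family $\varepsilon_n=1/n$ (legitimate by convexity of $\varepsilon\mapsto|D(g+\varepsilon f)|^2$, which turns the $\liminf$ into a monotone limit), so that the function-dependent null sets in \eqref{eq:|df|=|Df|} and \eqref{eq:d(af+bg)} are only intersected countably many times --- a point the paper's proof leaves implicit when it applies these properties to $f+\varepsilon g$ for arbitrary $\varepsilon$.
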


\begin{proof}
For simplicity of notation let us prove the Lemma for $f,g \in \LIP(X)$, for the general case the arguments are analogous. By the property 4 of the cotangent bundle, given $f,g \in \LIP(X)$ we know that there exist $V\subset X$ with $\mm(X \setminus V ) = 0$ and $\d f, \d g$ differentials of $f$ and $g$ from $V$ to $T^* X$ satisfying \eqref{eq:|df|=|Df|}. Up to a further $\mm$-negligible subset, we can assume that on $V$ also the equalities \eqref{eq:DfDg} hold. So for every $x \in V$ we have
\begin{eqnarray}
Df \cdot Dg (x)&=&D^+f (\nabla g)(x)=\liminf_{\varepsilon \downarrow 0} \frac{|D(f+\varepsilon g)|^2(x)-|Df|^2(x)}{2 \varepsilon} \nonumber \\
&=&\liminf_{\varepsilon \downarrow 0} \frac{|\d(f+\varepsilon g)|^2(x)-|\d f|^2(x)}{2 \varepsilon}= \liminf_{\varepsilon \downarrow 0} \frac{|\d f+\varepsilon \d g|^2(x)-|\d f|^2(x)}{2 \varepsilon} \nonumber \\
&=& <\d f, \d g>_x\quad , \nonumber
\end{eqnarray} 
where the second equality is just Definition \ref{def:Dfnablag} with $p=2$, the third equality comes from property 5 of $T^*X$ together with \eqref{eq:|df|=|Df|} applied to the functions $f$ and $f+\varepsilon g$, the fourth equality is a consequence of \eqref{eq:d(af+bg)}, and the last equality follows by the Hilbertianity of the norm, i.e. $|\d f|^2(x)=<\d f, \d f>_x$, stated in property 4 of $T^*X$.
\end{proof}
In order to prove the equivalence of the definitions of the angles we need the following lemma (for the proof see Theorem 1.8 in \cite{Hein})
\begin{lemma}[Lebesgue's differentiation in doubling metric measure spaces]\label{lem:Lebesgue}
If $f$ is a  locally integrable function on a doubling $\mms$ $(X,\sfd,\mm)$, then 
$$
\lim_{r \to 0} \frac{1}{\mm(B_r(x))}\int_{B_r(x)} f \, \d \mm= f(x) \quad \text{for } \mm\text{-a.e. } x \in X.
$$
\end{lemma}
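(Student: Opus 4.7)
The approach I would follow is the classical Hardy--Littlewood maximal function strategy, adapted to doubling metric measure spaces (the fact that $(X,\sfd,\mm)$ is a length space or a Gromov--Hausdorff limit plays no role here: only the doubling condition is used). Define the centered maximal function
\[
Mf(x):=\sup_{r>0}\frac{1}{\mm(B_r(x))}\int_{B_r(x)}|f|\,\d\mm.
\]
The target is the weak-type $(1,1)$ bound
\[
\mm\bigl(\{Mf>\lambda\}\bigr)\le\frac{C}{\lambda}\,\|f\|_{L^1(X,\mm)}\qquad\forall\,\lambda>0,
\]
for some constant $C$ depending only on the doubling constant in \eqref{eq:doubling}.

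The first main step is a Vitali-type covering lemma (the ``$5B$-lemma''): from any family of balls with uniformly bounded radii one can extract a pairwise disjoint subfamily whose $5$-times enlargements cover the original union. Applied to the family $\{B_{r_x}(x):x\in\{Mf>\lambda\}\}$ with radii realizing averages larger than $\lambda$, and combined with the doubling inequality $\mm(B_{5r}(x))\le C^3\,\mm(B_r(x))$ iterated, this yields the weak $(1,1)$ estimate above in the standard way.

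The second main step is the density argument. Continuous functions with compact support are dense in $L^1_{\rm loc}(X,\mm)$: indeed $(X,\sfd)$ is a complete separable metric space, so Lipschitz functions with bounded support are dense in $L^1$, which is more than sufficient. For any such continuous $g$ the conclusion
\[
\lim_{r\downarrow 0}\frac{1}{\mm(B_r(x))}\int_{B_r(x)}g\,\d\mm=g(x)
\]
holds for every $x\in X$ just by continuity (the assumption $\supp\mm=X$ from \eqref{eq:mms} guarantees $\mm(B_r(x))>0$, so the average is well defined). Now for general locally integrable $f$ and $\epsilon>0$, write $f=g+(f-g)$ with $g$ continuous and $\|f-g\|_{L^1(K)}<\epsilon$ on a fixed relatively compact set $K$. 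The oscillation
\[
\Omega f(x):=\limsup_{r\downarrow 0}\left|\frac{1}{\mm(B_r(x))}\int_{B_r(x)}f\,\d\mm-f(x)\right|
\]
is then controlled by $\Omega(f-g)(x)\le M(f-g)(x)+|f(x)-g(x)|$. The weak $(1,1)$ bound together with Chebyshev applied to $|f-g|$ gives $\mm(\{\Omega f>\lambda\}\cap K)\le C\epsilon/\lambda$ for every $\lambda>0$; letting $\epsilon\downarrow 0$ first and then $\lambda\downarrow 0$ shows $\Omega f=0$ $\mm$-a.e.\ on $K$, and exhausting $X$ by a countable family of such $K$'s concludes the proof.

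The main technical obstacle is the weak $(1,1)$ estimate, and more specifically the Vitali covering argument in a space which is neither Euclidean nor geodesic in any essential way. One has to be careful that the standard proof requires a preliminary reduction to the case of a family with uniformly bounded radii (which is harmless here because the maximal function is defined pointwise and one can truncate radii). Everything else is a routine approximation argument that only uses $\supp\mm=X$ and local finiteness of $\mm$, both of which are built into \eqref{eq:mms}.
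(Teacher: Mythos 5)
Your proof is correct and follows the classical Hardy--Littlewood maximal function argument (weak $(1,1)$ bound via the $5B$-covering lemma plus density of continuous functions), which is exactly the route taken in the reference the paper defers to for this lemma (Theorem 1.8 in \cite{Hein}); the paper itself gives no proof beyond that citation. The localization to relatively compact sets to handle merely locally integrable $f$ is handled appropriately, so there is nothing to object to.
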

Now we are ready to prove the $\mm$-a.e. equivalence of the definitions of angles.
\begin{theorem}[$\mm$-a.e. equivalence of the definition of angles]\label{thm:equivalenceAngle}
Let $(X,\sfd, \mm, \bar{x})$ be a $\mms$ as in \eqref{eq:mms} which moreover is  a pointed measured-Gromov-Hausdorff limit space of a sequence of pointed complete $n$-dimensional Riemannian manifolds $\{(M_i,g_i,\bar{x}_i)\}_{i \in \N}$ with $\Ric_{(M_i,g_i)}\geq -(n-1)$. 

Then, fixed $p,q \in X$, for $\mm$-a.e. $x\in X\setminus\{p,q\}$ our notion of angle $\bold{\angle_{pxq}}$ given in Definition \ref{def:anglecone(X,d)}  coincides with the one of Honda given in \eqref{eq:angle(pxq)}:
\begin{equation}\label{thm:equivalenceAngle}
\bold{\angle_{pxq}}=\{\angle(p,x,q)\}.
\end{equation}
\end{theorem}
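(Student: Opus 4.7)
The plan is to combine Theorem \ref{thm:Infin}, Honda's averaged-integral representation \eqref{eq:anglem(pxq)}, the identification of infinitesimal Hilbertian structures in Lemma \ref{lem:<>=Dnabla}, and the Lebesgue differentiation theorem in doubling metric measure spaces (Lemma \ref{lem:Lebesgue}).

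\textbf{Step 1: reduce our angle to the product $Dr_p\cdot Dr_q$.} Since $(X,\sfd,\mm)$ is a measured-Gromov-Hausdorff limit of Riemannian manifolds with Ricci bounded below, by Remark \ref{rem:ExIHS} the space is Lipschitz-infinitesimally Hilbertian. Applied to the $1$-Lipschitz distance functions $r_p$ and $r_q$, Theorem \ref{thm:Infin} yields that $\bold{\angle_{pxq}}$ is single valued for $\mm$-a.e.\ $x$; moreover, by \eqref{eq:DfDg} we have the explicit identification
\[
\cos(\angle_{pxq}) \;=\; D^+ r_p(\nabla r_q)(x) \;=\; D^- r_p(\nabla r_q)(x) \;=\; Dr_p\cdot Dr_q(x)
\]
for $\mm$-a.e.\ $x\in X\setminus\{p,q\}$.

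\textbf{Step 2: rewrite Honda's averaged formula in terms of $Dr_p\cdot Dr_q$.} By Theorem \ref{thm:Honda} together with formula \eqref{eq:anglem(pxq)}, for $\mm$-a.e.\ $x\in X\setminus\{p,q\}$ it holds
\[
\cos(\angle(p,x,q)) \;=\; \lim_{r\to 0}\frac{1}{\mm(B_r(x))}\int_{B_r(x)} \langle \d r_p,\d r_q\rangle\,\d\mm.
\]
Lemma \ref{lem:<>=Dnabla}, applied to $f=r_p$ and $g=r_q$ (which are globally Lipschitz on $X$), yields $\langle \d r_p,\d r_q\rangle_y = Dr_p\cdot Dr_q(y)$ for $\mm$-a.e.\ $y\in X$. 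Substituting inside the integral does not change its value, so we get
\[
\cos(\angle(p,x,q)) \;=\; \lim_{r\to 0}\frac{1}{\mm(B_r(x))}\int_{B_r(x)} Dr_p\cdot Dr_q \,\d\mm,
\]
again for $\mm$-a.e.\ $x$.

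\textbf{Step 3: pass to the limit via Lebesgue differentiation.} Since $r_p,r_q$ are $1$-Lipschitz, the Cauchy--Schwarz-type bound \eqref{Schwartz} gives $|Dr_p\cdot Dr_q|\leq |Dr_p|\,|Dr_q|\leq 1$ on $X$, hence this function is bounded, in particular locally $\mm$-integrable. The underlying space $(X,\sfd,\mm)$ is doubling on bounded subsets (cf.\ Remark \ref{rem:Equiv}), so Lemma \ref{lem:Lebesgue} applies and yields
\[
\lim_{r\to 0}\frac{1}{\mm(B_r(x))}\int_{B_r(x)} Dr_p\cdot Dr_q\,\d\mm \;=\; Dr_p\cdot Dr_q(x)
\]
for $\mm$-a.e.\ $x\in X$. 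Combining with Steps 1 and 2 we conclude that $\cos(\angle(p,x,q)) = Dr_p\cdot Dr_q(x) = \cos(\angle_{pxq})$ on the intersection of the four full-measure sets produced above, which is still of full measure; hence $\bold{\angle_{pxq}} = \{\angle(p,x,q)\}$ for $\mm$-a.e.\ $x\in X\setminus\{p,q\}$, as claimed.

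The steps are essentially bookkeeping: all the substantive content sits in the earlier results. The only point requiring care is the alignment of the various $\mm$-negligible exceptional sets (the one from Theorem \ref{thm:Infin} where $D^+=D^-$ fails, the one from Lemma \ref{lem:<>=Dnabla} where $\langle\d r_p,\d r_q\rangle \neq Dr_p\cdot Dr_q$, the one from Lebesgue differentiation, and Honda's cut-locus set $C_p\cup C_q$), but since countable unions of $\mm$-null sets are $\mm$-null, this is not a real obstacle.
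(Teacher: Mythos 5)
Your proposal is correct and follows essentially the same route as the paper's proof: both combine Honda's averaged formula \eqref{eq:anglem(pxq)}, the identification $\langle \d r_p,\d r_q\rangle = Dr_p\cdot Dr_q$ from Lemma \ref{lem:<>=Dnabla}, the bound \eqref{Schwartz} for local integrability, and Lebesgue differentiation in doubling spaces (Lemma \ref{lem:Lebesgue}). The only cosmetic difference is that you make the bookkeeping of the exceptional null sets explicit, which the paper leaves implicit.
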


\begin{proof}
It is well known that limit spaces are doubling on bounded subsets (a possible way to see this,  maybe not the shortest,  is that  limit spaces are $CD(K,N)$ for some $K,N \in \R$ and then they are doubling by \cite{Sturm06II} Corollary 2.4), so the Lebesgue differentiation Lemma \ref{lem:Lebesgue} holds.

Fix $p,q \in X$. Combining Theorem \ref{thm:Honda} and formula \eqref{eq:anglem(pxq)}, we get that for $\mm$-a.e. $x \in X\setminus\{p,q\}$ the \textquotedblleft Honda\textquotedblright-angle $\angle(p,x,q)$ is given by the following expression
\begin{equation}\label{eq:proof1}
\cos(\angle(p,x,q))=\lim_{r\to 0} \frac{1}{\mm(B_r(x))} \int_{B_r(x)} <\d r_p, \d r_q> \, \d \mm.
\end{equation}   
By the equivalence of the infinitesimal Hilbertian structures stated in Lemma \ref{lem:<>=Dnabla}, we also know that $<\d r_p, \d r_q>=D r_p \cdot D r_q$ $\mm$-a.e.; hence we can rewrite \eqref{eq:proof1} as 
\begin{equation}\nonumber\label{eq:proof2}
\cos(\angle(p,x,q))=\lim_{r\to 0} \frac{1}{\mm(B_r(x))} \int_{B_r(x)} D r_p \cdot D r_q \, \d \mm.
\end{equation} 
From \eqref{Schwartz} the function $|D r_p \cdot D r_q|$ is bounded by 1 hence, being $\mm$  locally finite by assumption, locally integrable. By Lemma \ref{lem:Lebesgue} and recalling the definition of angle cone \eqref{eq:anglecone(X,d)} (see also \eqref{eq:cpm} \eqref{eq:angle} and \eqref{eq:DfDg}) we finally obtain
$$
\cos(\angle(p,x,q))=\lim_{r\to 0} \frac{1}{\mm(B_r(x))} \int_{B_r(x)} D r_p \cdot D r_q \, \d \mm=  D r_p \cdot D r_q(x)= \cos (\angle_{pxq}), 
$$
for $\mm$-a.e.  $x \in X\setminus\{p,q\}$ as desired. 
\end{proof}

\begin{remark}\label{rem:Honda+}
{\rm Fix $p,q \in X$. Let us remark that, even if by Theorem \ref{thm:equivalenceAngle} our notion of angle is equivalent to the Honda's one for $\mm$-a.e. $x \in X$, the latter does not work for  $x \in C_p\cup C_q$; on the contrary our  Definition \ref{def:anglecone(X,d)} permits to define an angle cone for every $x \in X$, so in particular also for $x \in C_p\cup C_q$.} \fr
\end{remark}





\def\cprime{$'$}

\end{document}